\def\emails{$^\dag$\texttt{guidoboccali@gmail.com}, $^\oast$\texttt{anlare@ttu.ee}, $^\clubsuit$\texttt{folore@ttu.ee}, and $^\heartsuit$\texttt{stefano.luneia@gmail.com}}
\title{Bicategories of Automata, Automata in Bicategories}
\date{}
\author{Guido \textsc{Boccali}$^\dag$
	\institute{Università di Torino, Torino, Italy}
	\and
	Andrea \textsc{Laretto}$^\oast$
	\institute{Tallinn University of Technology, Tallinn, Estonia}
	\and
	Fosco \textsc{Loregian}$^\clubsuit$
	\institute{Tallinn University of Technology, Tallinn, Estonia\thanks{Loregian was supported by the ESF funded Estonian IT Academy research measure (project 2014-2020.4.05.19-0001).}}
	\and
	Stefano \textsc{Luneia}$^\heartsuit$
	\institute{Università di Bologna, Bologna, Italy}
}
\newcommand{\Pageref}[1]{p.~\pageref{#1}}
\NewDocumentCommand{\vxy}{o m}{
	\IfNoValueTF{#1}
	{\vcenter{\xymatrix{#2}}}
	{\vcenter{\xymatrix#1{#2}}}
}
\newcommand{\deferredRef}[1]{The proof is deferred to the appendix, \Pageref{proof_of_#1}}
\NewDocumentCommand{\mealy}{m m m O{I} O{O}}{\vxy{#1 & \ar[l]_-{#2} #1\otimes #4 \ar[r]^-{#3} & #5}}
\NewDocumentCommand{\moore}{m m m O{I} O{O}}{\vxy{#1 & \ar[l]_-{#2} #1\otimes #4 \:\: ; \:\: #1 \ar[r]^-{#3} & #5}}
\def\agdacirc{\firstblank\hspace{-0.25em}\circ\hspace{-0.25em}\firstblank}
\def\Kl{\cate{Kl}}
\def\opFib{\cate{opFib}}
\def\Mly{\cate{Mly}}
\def\bMly{\twoMly^\flat}
\def\Mac{\cate{Mac}^\mathrm{s}}
\def\twoMly{\Mly}
\def\Rel{\mymathbb{Rel}}
\def\Mre{\cate{Mre}}
\DeclareMathOperator{\id}{id}
\def\pro{\rightsquigarrow}
\tikzset{
	shift left/.style={decorate,decoration={simple line,raise=#1}},
	shift right/.style={decorate,decoration={simple line,raise=-1*#1}},
}
\def\c{\mathbin{\diamondsuit}}
\def\ls{/}
\def\kons{\mathbin{::}}
\def\concat{\mathbin{\scalebox{.8}{$+\kern-.2em+$}}}
\def\preMac{\cate{Mac}}
\begin{document}
\maketitle

\begin{abstract}
    We study \emph{bicategories of} (deterministic) \emph{automata}, 
    drawing from prior work of Katis\hyp{}Sabadini\hyp{}Walters, and Di Lavore\hyp{}Gianola\hyp{}Rom\'an\hyp{}Sabadini\hyp{}Soboci\'nski, and linking their bicategories of `processes' to a bicategory of Mealy machines constructed in 1974 by R. Guitart. We make clear the sense in which Guitart's bicategory retains information about automata, proving that Mealy machines \emph{à la} Guitart identify to \emph{certain} Mealy machines \emph{à la} K-S-W that we call \emph{fugal automata}; there is a biadjunction between fugal automata and the bicategory of K-S-W. Then, we take seriously the motto that a monoidal category is just a one-object bicategory. We define categories of Mealy and Moore machines \emph{inside} a bicategory $\bbB$; we specialise this to various choices of $\bbB$, like categories, relations, and profunctors. Interestingly enough, this approach gives a way to interpret the universal property of reachability as a Kan extension and leads to a new notion of 1- and 2-cell between Mealy and Moore automata, that we call \emph{intertwiners}, related to the universal property of K-S-W bicategory.
\end{abstract}

\section{Introduction}\label{sec:intro}
The profound connection between category theory and automata theory is easily explained: one of the founders of the first wrote extensively about the second \cite{Eilenberg:1974,Eilenberg1967}. A more intrinsic reason is that category theory is a theory of \emph{systems} and \emph{processes}. Morphisms in a category can be considered a powerful abstraction of `sequential operations' performed on a domain/input to obtain a codomain/output. Hence the introduction of categorical models for computational machines has been rich in results, starting from the elegant attempts by Arbib and Manes \cite{17344305293823001d18e665ad1a9d7e3addbe67,arbib1975fuzzy,arbib1975adjoint,Arbib1975,Arbib1980,Pohl_1970} --cf. also \cite{adam-trnk:automata,2009,Ehrig} for exhaustive monographs--
and Goguen \cite{goguen:realization,Goguen1975closed,Goguen1975}, up to the ultra\hyp{}formal --and sadly, under-appreciated-- experimentations of \cite{bainbridge1972unified,Bainbridge1975addressed,guitart1974remarques,guitart1978bimodules,ea309886425a9829ea5df96a513b157fd9940689} using hyperdoctrines, 2-dimensional monads, bicategories, lax co/limits\dots{} up to the modern coalgebraic perspective of \cite{Jacobs2006,Rutten2000,Singh2021,Venema2006}; all this, without mentioning categorical approaches to Petri nets \cite{Meseguer1990}, based essentially on the same analogy, where the computation of a machine is \emph{concurrent} --as opposed to single\hyp{}threaded.

Furthermore, many constructions of computational significance often, if not always, have a mathematical counterpart in terms of categorical notions: the transition from a deterministic machine to a non\hyp{}deterministic one is reflected in the passage from automata in a monoidal category (cf. \cite{Ehrig,Meseguer1975}), to automata in the Kleisli category of an opmonoidal monad (cf. \cite{Guitart1980,jacobs_2016}; this approach is particularly useful to capture categorically \emph{stochastic} automata, \cite{DArgenio2005,arbib1975fuzzy,burroni2009distributive} as they appear as automata in the Kleisli category of a probability distribution monad); \emph{minimisation} can be understood in terms of factorisation systems (cf. \cite{colcombet_et_al,Goguen1975}); behaviour as an adjunction (cf. \cite{2d281e1e3e0b525128f55519cf8a03fa52ce6252,NAUDE1979277}).

The present work starts from the intuition, first presented in \cite{ITA_2002__36_2_181_0,rosebrugh_sabadini_walters_1998}, that the analogy between morphisms and sequential machines holds up to the point that the series and parallel composition of automata should itself be reflected in the `series' and `parallel' composition of morphisms in a category. As a byproduct of the `Circ' construction in \emph{op. cit.}, one can see how the 1-cells of a certain monoidal bicategory specialise exactly a \emph{Mealy machines} $E \xot d E \otimes I \xto s O$ with inputs and outputs $I$ and $O$.

\paragraph{Outline of the paper.} The first result we present in \autoref{sec:bicats} is that this category relates to \emph{another} bicategory constructed by R. Guitart in \cite{guitart1974remarques}. Guitart observes that one can use certain categories $\preMac(\clM,\clN)$ of spans as hom categories of a bicategory $\preMac$, and shows that $\preMac$ admits a concise description as the Kleisli bicategory of the \emph{monad of diagrams} \cite[§1]{guitart1974remarques} (cf. also \cite{ea309886425a9829ea5df96a513b157fd9940689}, by the same author, and \cite{Perrone2022} for a more modern survey); Mealy machines shall be recognisable as the 1-cells of $\preMac$ between monoids, regarded as categories with a single object. The fundamental assumption in \cite{guitart1974remarques} is that a Mealy machine $E \xot d E \otimes M \xto s N$ satisfies a certain property of compatibility with the action of $d$ on $E$, cf. \eqref{beheq}, that we call being a \emph{fugal} automaton:
\[s(e,m\cdot m') = s(e,m)\cdot s(d(e,m),m').\notag\]
This notion can be motivated in the following way: if $s$ satisfies the above equation, then it lifts to a functor $\clE[d]\to N$ defined on the category of elements of the action $d$, and in fact, defines a `relational action' in its own right, compatible with $d$ (formally speaking, $\clE[d]$ is a \emph{displayed category} \cite{lmcs:5252} over $N$).
We show that there is a sub\hyp{}bicategory $\bMly_\Set$ of $\Mly_\Set$ made of fugal automata and that $\bMly_\Set$ is biequivalent (actually, strictly) to the 1-full and 2-full sub\hyp{}bicategory of $\preMac$ spanned by monoids.

The second result we propose in this paper is motivated by the motto for which a monoidal category is just a bicategory with a single object: what are automata \emph{inside a bicategory $\bbB$ with more than one object}, where instead of input/output \emph{objects} $I,O$ we have input/output 1-cells, arranged as $e \overset{\delta}\Leftarrow e\circ i \overset{\sigma}\To o$? Far from being merely formal speculation (a similar idea was studied in a short, cryptic note \cite{Bainbridge1975addressed} to describe behaviour through Kan extensions: we take it seriously and present it as a quite straightforward observation in \autoref{as_a_kan}), we show how this allows for a concise generalisation of `monoidal' machines.
\paragraph{Related work.} A word on related work and how we fit into it: the ideas in \autoref{sec:bicats} borrow heavily from \cite{ITA_2002__36_2_181_0,rosebrugh_sabadini_walters_1998} where bicategories of automata (or `processes') are studied in fine detail; in \autoref{sec:bicats} we carry on a comparison with a different approach to bicategories of automata, present in \cite{guitart1974remarques} but also in \cite{guitart1978bimodules,ea309886425a9829ea5df96a513b157fd9940689}; in particular, our proof that there is an adjunction between the two bicategories is novel --to the best of our knowledge-- and it hints at the fact that the two approaches are far from being independent. At the level of an informal remark, the idea of approaching automata via (spans where one leg is a) fibrations bears some resemblance to Walters' work on context\hyp{}free languages through displayed categories in \cite{Walters1989}, and the requirement to have a fibration as one leg of the span should be thought as mirroring \emph{determinism} of the involved automata: if $\langle s,d\rangle : E\times M\to N\times E$ is fugal and $s$ defines a \emph{fibration} over $N$, then $E$ is a $M$-$N$-bimodule, not only an $M$-set; there is extensive work of Betti\hyp{}Kasangian \cite{bettiproperty,betti1981quasi,kasangian1983cofibrations} and Kasangian\hyp{}Rosebrugh \cite{kasangian1990glueing} on `profunctorial' models for automata, their behaviour, and the universal property enjoyed by their minimisation: spans of two\hyp{}sided fibrations \cite{StreetFibreYoneda1974,street1980fibrations} and profunctors are well\hyp{}known to be equivalent ways to present the same bicategory of two\hyp{}sided fibrations. Carrying on our study will surely determine a connection between the two approaches.

For what concerns \autoref{sec:bicat_mach}, the idea of valuing a Mealy or a Moore machine in a bicategory seems to be novel, although in light of \cite{rosebrugh_sabadini_walters_1998} and in particular of their concrete description of $\clC=\Omega\Sigma(\clK,\otimes)$ it seems that both $\Mly_\bbB$ and $\Mre_\bbB$ allow defining tautological functors into $\clC$. How these two bicategories relate is a problem we leave for future investigation: \cite{rosebrugh_sabadini_walters_1998} proves that when $\clK$ is Cartesian monoidal, $\Mly_\clK$ is $\Omega\Sigma(\clK,\times)$. The conjecture is that our $\Mly_\bbB$ is $\Omega\bbB$ under some assumptions on the bicategory $\bbB$: our notion of intertwiner seems to hint in that direction. Characterising `behaviour as a Kan extension' is nothing but taking seriously the claim that animates applications of coalgebra theory \cite{Jacobs2015,jacobs_2016} to automata; the --apparently almost unknown-- work of Bainbridge \cite{Bainbridge1975addressed} bears some resemblance to our idea, but his note is merely sketched, no plausibility for his intuition is given. Nevertheless, we recognise the potential of his idea and took it to its natural continuation with modern tools of 2-dimensional algebra.
\subsection{Mealy and Moore automata}
The scope of the following subsection is to introduce the main characters studied in the paper:\footnote{An almost identical introductory short section appears in \cite{noi:completeness}, of which the present note is a parallel submission --although related, the two manuscripts are essentially independent, and the purpose of this repetition is the desire for self\hyp{}containment.} categories of automata valued in a monoidal category $(\clK,\otimes)$ (in two flavours: `Mealy' machines, where one considers spans $E\leftarrow E\otimes I\to O$, and `Moore', where instead one consider pairs $E\leftarrow E\otimes I,E\to O$.

The only purpose of this short section is to fix the notation for \autoref{sec:bicats} and \ref{sec:bicat_mach}; comprehensive classical references for this material are \cite{adam-trnk:automata,Ehrig}.

\medskip
For the entire subsection, we fix a monoidal category $(\clK,\otimes,1)$.
\begin{definition}[Mealy machine]\label{mmach_1cells}
  A \emph{Mealy machine} in $\clK$ of input object $I$ and output object $O$ consists of a triple $(E,d,s)$ where $E$ is an object of $\clK$ and $d,s$ are morphisms in a span $\fke := \left(\mealy{E}{d}{s}\right)$.
\end{definition}
\begin{remark}[The category of Mealy machines]\label{mmach_2cells}
  Mealy machines of fixed input and output $I,O$ form a category, if we define a \emph{morphism of Mealy machines} $f : \fke=(E,d,s)\to (F, d', s')=\fkf$ as a morphism $f : E\to F$ in $\clK$ such that
  \begin{itemize}
    \item $d'\circ (f\otimes I) = f \circ d$;
    \item $s'\circ (f\otimes I) = s$.
  \end{itemize}
  Composition and identities are performed in $\clK$.
\end{remark}
The category of Mealy machines of input and output $I,O$ is denoted as $\Mly_\clK(I,O)$.
\begin{definition}[Moore machine]\label{moore_1cells}
  A \emph{Moore machine} in $\clK$ of input object $I$ and output object $O$ is a diagram $\fkm := \left(\moore{E}{d}{s}\right)$.
\end{definition}
\begin{remark}[The category of Moore machines]\label{moore_2cells}
  Moore machines of fixed input and output $I,O$ form a category, if we define a \emph{morphism of Moore machines} $f : \fke=(E,d,s)\to (F, d', s')=\fkf$ as a morphism $f : E\to F$ in $\clK$ such that
  \begin{itemize}
    \item $d'\circ (f\otimes I) = f \circ d$;
    \item $s'\circ f = s$.
  \end{itemize}
\end{remark}
\begin{remark}[Canonical extension of a machine]\label{can_ext}
    If $(\clK,\otimes)$ has countable coproducts preserved by each $A\otimes \firstblank$ then the span \autoref{mmach_1cells}, considering for example Mealy machines, can be `extended' to a span
    \[\mealy{E}{d^*}{s^*}[I^*][O]\label{mly_extension}\]
    where $d^*,s^*$ can be defined inductively from components $d_n,s_n : E\otimes I^{\otimes n} \to E,O$; if $\clK$ is closed, the map $d^*$ corresponds, under the monoidal closed adjunction, to the monoid homomorphism $I^* \to [E,E]$ induced by the universal property of $I^* = \sum_{n\ge 0} I^{\otimes n}$.
\end{remark}

\section{Bicategories of automata}\label{sec:bicats}
Let $(\clK,\times)$ be a Cartesian category. There is a bicategory $\twoMly_\clK$ defined as follows (cf. \cite{rosebrugh_sabadini_walters_1998} where this is called `$\cate{Circ}$' and studied more generally, in case the base category has a non\hyp{}Cartesian monoidal structure):
\begin{definition}[The bicategory $\twoMly_\clK$, \cite{rosebrugh_sabadini_walters_1998}]\label{saba_mly}
  The bicategory $\twoMly_\clK$ has
  \begin{itemize}
    \item its \emph{0-cells} $I,O,U,\dots$ are the same objects of $\clK$;
    \item its \emph{1-cells} $I\to O$ are the Mealy machines $(E,d,s)$, i.e. the objects of the category $\Mly_\clK(I,O)$ in \autoref{mmach_2cells}, thought as morphisms $\langle s,d\rangle : E\times I \to O\times E$ in $\clK$;
    \item its \emph{2-cells} are Mealy machine morphisms as in \autoref{mmach_2cells};
    \item the composition of 1-cells $\firstblank \c \firstblank$ is defined as follows: given 1-cells $\langle s,d\rangle : E\times I\to J\times E$ and $\langle s',d'\rangle : F\times J \to K\times F$ their composition is the 1-cell $\langle s'\c s,d'\c d\rangle : (F\times E)\times I \to K\times (F\times E)$, obtained as
          \[\label{qui}\vxy{F\times E \times I \ar[r]^-{F\times \langle s,d\rangle} &
            F\times J \times E \ar[r]^-{\langle s',d'\rangle \times E} &
            K\times F\times E;}\]
    \item the \emph{vertical} composition of 2-cells is the composition of Mealy machine morphisms $f : E \to F$ as in \autoref{mmach_2cells};
    \item the \emph{horizontal} composition of 2-cells is the operation defined thanks to bifunctoriality of $\firstblank\c\firstblank : \twoMly_\clK(B,C)\times \twoMly_\clK(A,B)\to \twoMly_\clK(A,C)$;
    \item the associator and the unitors are inherited from the monoidal structure of $\clK$.
  \end{itemize}
\end{definition}
\begin{remark}
  Spelled out explicitly, the composition of 1-cells in \autoref{qui} corresponds to the following morphisms (where we freely employ $\lambda$-notation available in any Cartesian closed category):
  \[
  d_2\c d_1 : \lambda efa.\langle d_2(f,s_1(e,a)),d_1(e,a)\rangle \qquad\qquad s_2 \c s_1 : \lambda efa.s_2(f,s_1(e,a))
  \label{d2d1_term}
  \]
\end{remark}
\begin{remark}[Kleisli extension of automata as base changes]
If $P : \clK \to\clK$ is a commutative monad \cite{kock:cc-commnd,kock1972strong}, we can lift the monoidal structure $(\clK,\otimes)$ to a monoidal structure $(\Kl(P), \bar\otimes)$ on the Kleisli category of $P$; this leads to the notion of \emph{$P$-non\hyp{}deterministic automata} or \emph{$P_\lambda$-machines} studied in \cite[§2, Définition 6]{Guitart1980}. Nondeterminism through the passage to a Kleisli category is a potent idea that developed into the line of research on automata theory through coalgebra theory \cite{jacobs_2016}, cf. in particular Chapter 2.3 for a comprehensive reference, or the self\hyp{}contained \cite{Jacobs2006}.
\end{remark}
We do not investigate the theory of $P_\lambda$-machines apart from the following two results the proof of which is completely straightforward: we content ourselves with observing that the results expounded in \cite{Katis1997,rosebrugh_sabadini_walters_1998}, and in general the language of bicategories of processes, naturally lends itself to the generation of \emph{base\hyp{}change functors}, of which the following two are particular examples.
\begin{proposition}\label{K_to_KL}
  The correspondence defined at the level of objects by sending $(E,d,s)\in\Mly_{\clK}(I,O)$ to
  \[\xymatrix@C=5mm{
    PE & \ar[l]_{\eta_E} E &\ar[l]_d  E\otimes I \ar[r]^s & O \ar[r]^{\eta_O}& PO
    }\]
  extends to a functor $L : \Mly_\clK(I,O)\to \Mly_{\Kl(P)}(I,O)$.
\end{proposition}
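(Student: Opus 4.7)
The cleanest route is to factor $L$ through the canonical identity-on-objects functor $J : \clK \to \Kl(P)$ induced by the unit of the monad, sending $f : A \to B$ to $\eta_B \circ f : A \to PB$, and then observe that \emph{any} (strong) monoidal functor between monoidal categories transports Mealy machines to Mealy machines in a functorial way.

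First I would verify that $J$ is strict monoidal from $(\clK,\otimes)$ to $(\Kl(P), \bar\otimes)$: on objects this is trivial (both categories share the same objects and $\bar\otimes$ agrees with $\otimes$ on them), and on morphisms naturality of $\eta$ combined with commutativity of $P$ makes $J(f) \bar\otimes J(g)$ coincide with $J(f \otimes g)$ — commutativity of $P$ is used here precisely to ensure that $\bar\otimes$ is well defined as a bifunctor on $\Kl(P)$, cf.\ Kock's original treatment in \cite{kock:cc-commnd}. Functoriality of $J$ itself amounts to the standard identity $\mu \circ P\eta = \id$ and naturality of $\eta$, which give $J(f) *_P J(g) = \mu_B \circ P(\eta_B \circ f) \circ \eta_A \circ g = \eta_B \circ f \circ g = J(f \circ g)$.

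Next I would show that every strong monoidal functor $F : (\clK,\otimes) \to (\clL,\otimes')$ lifts to a functor $\Mly_F : \Mly_\clK(I,O) \to \Mly_\clL(FI,FO)$ by sending a Mealy machine $(E,d,s)$ to $(FE, Fd, Fs)$ (post-composed with the monoidal coherence iso $FE \otimes' FI \cong F(E \otimes I)$ when $F$ is not strict) and a morphism $f : (E,d,s) \to (F,d',s')$ of Mealy machines to $Ff$. Functoriality is immediate since $F$ itself is functorial, and the two equational conditions of \autoref{mmach_2cells} are preserved because $F$ preserves composition and the tensor on morphisms up to the coherence iso. Applying this construction with $F = J$ recovers precisely the assignment in the statement, since $J$ is the identity on objects and $J(d) = \eta_E \circ d$, $J(s) = \eta_O \circ s$.

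The only slightly delicate point — and the place where the monad structure is genuinely used beyond naturality of $\eta$ — is checking that the Kleisli-monoidal lift $\bar\otimes$ makes $J$ strict monoidal rather than merely lax; this rests on the fact that when $P$ is commutative the strength and costrength cohere so that $\eta_{A \otimes B} = \eta_A \bar\otimes \eta_B$. Everything else reduces to bookkeeping, and the claim follows.
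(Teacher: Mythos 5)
Your proposal is correct and matches the paper's own route: the paper declares the proof ``completely straightforward'' and then, in the remark following \autoref{KL_to_K}, records exactly your factorisation, re-obtaining $L$ as base change along the free (lax monoidal) functor $F_P : \clK \to \Kl(P)$, which is your $J$. The only cosmetic difference is that you verify $J$ is \emph{strict} monoidal (using $D \circ (\eta \otimes \eta) = \eta$ for the commutative monad $P$), whereas the paper's remark only needs $F_P$ to be lax monoidal — both suffice for transporting Mealy machines and their morphisms.
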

\begin{proposition}\label{KL_to_K}
  The correspondence sending $(E,d,s)\in\Mly_{\Kl(P)}(I,O)$ into
  \[\xymatrix{
    PE &\ar[l]_-{\mu_E} PPE &\ar[l]_-{Pd\circ D} PE\otimes PI \ar[r]^-{Ps\circ D} & PPO \ar[r]^-{\mu_O}& PO
    }\]
  extends to a functor $(-)^\text{e} : \Mly_{\Kl(P)}(I,O)\to \Mly_\clK(PI,PO)$.
\end{proposition}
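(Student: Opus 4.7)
The plan is to define $(-)^\text{e}$ on morphisms and verify that the resulting assignment lands in $\Mly_\clK(PI,PO)$ and is functorial. Given a morphism $f : (E,d,s)\to (F,d',s')$ in $\Mly_{\Kl(P)}(I,O)$, i.e.\ a Kleisli arrow $f : E\to PF$ in $\clK$ satisfying the two conditions of \autoref{mmach_2cells} read in $(\Kl(P),\bar\otimes)$, I would set $f^\text{e} := \mu_F\circ Pf : PE \to PF$, the standard Kleisli extension of $f$. With this choice, the statement on objects is just a definition, so the content of the proposition sits in checking that $f^\text{e}$ is a Mealy morphism in $\clK$ and that composition is preserved.

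The core step is verifying that $f^\text{e}$ is a Mealy morphism from $(PE,d^\text{e},s^\text{e})$ to $(PF,(d')^\text{e},(s')^\text{e})$. Unfolding the Mealy conditions for $f$ in $\Kl(P)$ gives, in $\clK$, the identities $\mu_F \circ Pd' \circ D \circ (f\otimes \eta_I) = \mu_F \circ Pf \circ d$ and $\mu_O \circ Ps' \circ D \circ (f\otimes \eta_I) = s$, since Kleisli composition is $g\bar\circ h = \mu\circ Pg\circ h$ and the Kleisli tensor $f\bar\otimes \id_I$ expands to $D\circ (f\otimes \eta_I)$. The dynamics equation $(d')^\text{e}\circ (f^\text{e}\otimes PI) = f^\text{e}\circ d^\text{e}$ is then handled by a diagram chase: expand both sides using the definitions, use naturality of $D$ to slide it past $Pf\otimes PI$, invoke the coherence between $D$ and $\mu$ afforded by commutativity of $P$, apply associativity of $\mu$, and finally substitute the hypothesis on $f$. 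The output equation reduces analogously to the Kleisli identity for $s$.

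Functoriality is then automatic, since the map $f\mapsto \mu_F\circ Pf$ is the standard right-adjoint functor $\Kl(P) \to \clK$ factoring through Eilenberg--Moore algebras: Kleisli identities $\eta_E$ are sent to $\mu_E\circ P\eta_E = \id_{PE}$ by the monad unit law, and composition is preserved by naturality and associativity of $\mu$. The only genuine bookkeeping lives in the previous paragraph, and the main obstacle there is managing the interplay between the left and right strengths $PA\otimes B\to P(A\otimes B)$ and $A\otimes PB\to P(A\otimes B)$ whose two composites into $D$ agree exactly because $P$ is commutative; it is precisely this coherence that makes the chase go through without any further hypothesis on $\clK$ or $P$.
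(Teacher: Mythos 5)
Your proposal is correct and matches the paper's (largely implicit) argument: the paper declares the proof ``completely straightforward'' and later recovers \autoref{KL_to_K} as base change along the lax monoidal forgetful functor $U_P : \Kl(P)\to\clK$, and your definition $f^{\mathrm{e}} := \mu_F\circ Pf$ together with the diagram chase via naturality of $D$, the monoidal coherence of $\mu$ with $D$ (i.e.\ commutativity of $P$), and the unit law $\mu_I\circ P\eta_I=\mathrm{id}$ is exactly the hands-on unfolding of that base change. No gaps.
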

More precisely, the proof of the following result is straightforward --only slightly convoluted in terms of notational burden-- so much so that we feel content to enclose it in a remark.
\begin{remark}
    Let $\clH,\clK$ be cartesian monoidal categories, then we can define 2-categories $\twoMly_\clH,\twoMly_\clK$ as in \autoref{saba_mly}; let $F : \clH\to\clK$ be a lax monoidal functor. Then, there exists a `base change' pseudofunctor $F_* : \twoMly_\clH \to \twoMly_\clK$, which is the 1-cell part of a 2-functor $\Cat_\times\to\cate{Bicat}$ defined on objects as $\clK\mapsto\Mly_\clK$, from (Cartesian monoidal categories, product\hyp{}preserving functors, Cartesian natural transformations), to (bicategories, pseudofunctors, oplax natural transformations).
\end{remark}
As a corollary, we re\hyp{}obtain the functors of \autoref{KL_to_K} and \autoref{K_to_KL} from the free and forgetful functors $F_P : \clK\to \Kl(P)$ and $U_P : \Kl(P)\to\clK$.
\subsection{Fugal automata, Guitart machines}
A conceptual construction for $\twoMly_\clK$ in \autoref{saba_mly} is given as follows in \cite{Katis1997}: it is the category $\Omega\Sigma(\clK,\otimes)$ of pseudofunctors $\bbN \to \Sigma(\clK,\times)$ and lax transformations, where $\Sigma$ is the `suspension' of $(\clK,\otimes)$, i.e. $\clK$ regarded as a one\hyp{}object bicategory; a universal property for $\twoMly_\clK$ is provided in \cite{ITA_2002__36_2_181_0} (actually, for any $\Omega\Sigma(\clK,\otimes)$): it is the free \emph{category with feedbacks} (op. cit., Proposition 2.6, see also \cite{openTransitionSystems21}) on $\clK$. The bicategory $\twoMly_\clK$ addresses the fundamental question of whether one can fruitfully consider morphisms in a category as an abstraction of `sequential operations' performed on a domain/input to obtain a codomain/output, and up to what point the analogy between morphisms and sequential machines holds up (composing 1-cells in $\Mly_\clK$ accounts for the sequential composition of state machines, where the state $E$ is an intrinsic part of the specification of a machine/1-cell $\langle s,d\rangle$).

Twenty eight years before \cite{ITA_2002__36_2_181_0}, however, René Guitart \cite{guitart1974remarques} exhibited another bicategory $\preMac$ of `Mealy machines', defined as a suitable category of spans, of which one leg is a fibration, and its universal property: $\preMac$ is the Kleisli bicategory of the diagram monad (\emph{monade des diagrammes} in \cite[§1]{guitart1974remarques}, cf. \cite{kock1967limit,Perrone2022}) $\Cat/\!\!/\firstblank$.\footnote{Guitart's note \cite{guitart1974remarques} is rather obscure with respect of the fine details of his definition, as he chooses for 2-cells the $H$ for which the upper triangle in \eqref{qua} is only \emph{laxly} commutative, and when it comes to composition of 1-cells he invokes a \emph{produit fibré canonique}; apparently, this can't be interpreted as a strict pullback, or there would be no way to define horizontal composition of 2-cells; using a comma object instead of a strict pullback, the lax structure is given by the universal property --observe that the functor that must be an opfibration is indeed an opfibration, thanks to \cite[Exercise 1.4.6]{CLTT}, but this opfibration does not remember much of the opfibration $q$ one pulled back. Our theorem involves a strict version of Guitart's $\preMac$, because the functor $\Pi$ of \autoref{fun_mly_mac} factors through $\Mac\subseteq\preMac$.}
\begin{definition}[The bicategory $\Mac$, adapting \cite{guitart1974remarques}]\label{guita_mly}
  Define a bicategory $\Mac$ as follows:
  \begin{itemize}
    \item 0-cells are categories $\clA,\clB,\clC$\dots;
    \item 1-cells $(\clE;p,S) : \clA\to\clB$ consist of spans
          \[\vxy{\clA &\ar@{->>}[l]_-p \clE\ar[r]^-S & \clB}\]
          where $p : \clE\to\clA$ is a discrete opfibration;
    \item 2-cells $H : (\clE;p,S)\To (\clF;q,T)$ are pairs where $H : \clE\to \clF$ is a morphism of opfibrations (cf. \cite[dual of 1.7.3.(i)]{CLTT}): depicted graphically, a 2-cell is a diagram
          \[\label{qua}\vxy{
              \clE\ar[dr]_-H\ar[r]^S\ar@{->>}[d]_p  &\clB \\
              \clA  & \clF\ar@{->>}[l]^q \ar[u]_T
            }\]
          where both triangles commute and $H$ is an opCartesian functor (it preserves opCartesian morphisms);
    \item composition of 1-cells $\clA \xot p \clE\xto S \clB$ and $\clB \xot q \clF \xto T \clC$ is via pullbacks, as it happens in spans, and all the rest of the structure is defined as in spans.
  \end{itemize}
\end{definition}
Given this, a natural question that might arise is how do the two bicategories of \autoref{saba_mly} and \autoref{guita_mly} interact, if at all?

In the present section, we aim to prove the existence of an adjunction (cf. \autoref{the_adjunzia}) between a suitable sub\hyp{}bicategory of $\Mac$ and a sub\hyp{}bicategory of $\Mly_\Set$ spanned over what we call \emph{fugal} Mealy machines between monoids (cf. \autoref{def:beh_mealy}).\footnote{A \emph{fugue} is `a musical composition in which one or two themes are repeated or imitated by successively entering voices and contrapuntally developed in a continuous interweaving of the voice parts', cf. \cite{merriam_webster}. In our case, the interweaving is between $s,d$ in a Mealy machine.}

Since the construction of $\Mac$ outlined in \cite{guitart1974remarques} requires some intermediate steps (and it is written in French), we deem it necessary to delve into the details of how its structure is presented. To fix ideas, we keep working in the category of sets and functions.
\begin{notation}
  In order to avoid notational clutter, we will blur the distinction between a monoid $M$ and the one\hyp{}object category it represents; also, given the $d$ part of a Mealy machine, we will denote as $d^*$ both the extension $E\times I^* \to E$ of \autoref{can_ext}, which is a monoid action of $I^*$ on $E$, and the functor $I^* \to\Set$ to which the action corresponds.
\end{notation}
\begin{remark}\label{cat_elem}
  In the notation above, a Mealy machine $\fke=(E,d,s)$ yields a discrete opfibration (cf. \cite{acc,CLTT}) $\clE[d^*] \to I^*$ over the monoid $I^*$, and $\clE[a]$ is the \emph{translation category} of an $M$-set $a : M\times X\to X$ (cf. \cite{ronnie} for the case when $M$ is a group: clearly, $\clE[a]$ is the category of elements of the action $a : M\to\Set$ regarded as a functor), i.e. the category having
  \begin{itemize}
    \item objects the elements of $E$;
    \item a morphism $m : e\to e'$ whenever $e'=d^*(e,m)$.
  \end{itemize}
  Composition and identities are induced by the fact that $d^*$ is an action.
\end{remark}
\begin{remark}\label{up_4_MAC}
  The hom\hyp{}categories $\Mac(\clA,\clB)$ of \autoref{guita_mly} fit into strict pullbacks
  \[\vxy{
      \Mac(\clA,\clB) \xpb \ar[r]\ar[d]& \Cat\ls \clB \ar[d]\\
      \opFib/\clA \ar[r] & \Cat
    }\]
  where $\Cat\ls \clB$ is the usual slice category of $\Cat$ over $\clB$.
\end{remark}
\begin{definition}[Fugal automaton]\label{def:beh_mealy}
  Let $M,N$ be monoids; a Mealy machine $\langle s,d\rangle : E\times M \to N \times E$ is \emph{fugal} if its $s$ part satisfies the equation
  \[s(e,m\cdot m') = s(e,m)\cdot s(d(e,m),m').\label{beheq}\]
\end{definition}

\begin{remark}\label{why_fugal}
  This definition appears in \cite[§2]{guitart1974remarques} and it looks  an ad\hyp{}hoc restriction for what an output map in a Mealy machine shall be; but \eqref{beheq} can be motivated in two ways:
  \begin{itemize}
    \item A fugal Mealy machine $\langle s ,d\rangle : E\times M \to N \times E$ induces in a natural way a functor $\Sigma : \clE[d^*] \to N$ because \eqref{beheq} is exactly equivalent to the fact that $\Sigma$ defined on objects in the only possible way, and on morphisms as $\Sigma(e\to d^*(e,m)) = s(e,m)$ preserves (identities and) composition;
    \item given a generic Mealy machine $\langle s,d\rangle : E\times A \to B \times E$ one can produce a `universal' fugal Mealy machine $\langle s,d \rangle^\flat =\langle s^\flat,\firstblank\rangle : E\times A^* \to B^* \times E$, and this construction is well\hyp{}behaved for 1-cell composition in $\Mly_\Set$, in the sense that $(s_2\c s_1)^\flat = s_2^\flat\c s_1^\flat$.
  \end{itemize}
\end{remark}
The remainder of this section is devoted to making these claims precise (and prove them). In particular, the `universality' of $\langle s,d\rangle^\flat$ among fugal Mealy machines obtained from $\langle s,d\rangle$ is clarified by the following \autoref{lemma_1} and by \autoref{the_adjunzia}, where we prove that there is a 2-adjunction between $\Mly_\Set$ and $\bMly_\Set$.
\begin{lemma}\label{lemma_1}
  Given sets $A,B$, denote with $A^*,B^*$ their free monoids; then, there exists a `fugal extension' functor $(\firstblank)_{A,B}^\flat : \Mly_\Set(A,B) \to \bMly_\Set(A^*,B^*)$.
\end{lemma}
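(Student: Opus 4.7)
The plan is to define the fugal extension $(\fke)^\flat = (E, d^*, s^\flat)$ on a Mealy machine $\fke = (E,d,s)$ with $d : E\times A\to E$ and $s : E\times A\to B$ by reusing the canonical extension $d^*$ of \autoref{can_ext} for the transition map, and by defining $s^\flat : E\times A^* \to B^*$ recursively on words. Specifically, I would set $s^\flat(e, []) := []$ and $s^\flat(e,\, a\kons w) := s(e,a) \kons s^\flat(d(e,a), w)$; equivalently, via $\lambda$-notation, $s^\flat$ is the unique map whose components $s^\flat_n : E\times A^n \to B^n$ satisfy $s^\flat_{n+1}(e, a\kons w) = s(e,a)\kons s^\flat_n(d(e,a),w)$.

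Next I would check that $(E, d^*, s^\flat)$ is actually fugal, i.e., that the equation \eqref{beheq} $s^\flat(e, w\concat w') = s^\flat(e,w)\concat s^\flat(d^*(e,w), w')$ holds for all $e\in E$ and $w,w'\in A^*$. The argument is induction on $w$: the base case $w=[]$ reduces to $s^\flat(e,w') = s^\flat(e,w')$, using that $d^*(e,[])=e$ and the unit law of concatenation; the inductive step $w = a\kons u$ unfolds both sides using the recursive clause and applies the induction hypothesis at the state $d(e,a)$, after which associativity of $\concat$ and the recursive definition of $d^*$ match the two sides. This is where the definition of fugal looks tailor-made: the inductive clause of $s^\flat$ is essentially \eqref{beheq} for single-letter prefixes, and the global equation is the unique extension of this to all of $A^*$.

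The second thing to check is functoriality on 2-cells: if $f : \fke\to\fkf$ is a morphism of Mealy machines in the sense of \autoref{mmach_2cells}, I must show that $f$ is again a morphism $(\fke)^\flat \to (\fkf)^\flat$ in $\bMly_\Set(A^*,B^*)$, i.e., that $d'^*\circ(f\times A^*) = f\circ d^*$ and $s'^\flat\circ(f\times A^*) = s^\flat$. Both are inductions on word length: for the transition condition, the inductive step uses $d'(f(e), a) = f(d(e,a))$; for the output condition, one combines $s'(f(e),a) = s(e,a)$ with the inductive hypothesis applied at the state $d(e,a)$, together with the transition condition just established. Identities and composition in $\Mly_\Set(A,B)$ and $\bMly_\Set(A^*,B^*)$ are computed in $\Set$, so preservation is automatic once the two square identities above hold.

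I do not expect serious obstacles: the only subtlety is bookkeeping the interplay between $d^*$ and $s^\flat$ in the induction, ensuring that the state at which the ``tail'' of the output word is computed is indeed $d(e,a)$ and not, say, $d^*(e,a\kons w)$. Everything else is routine, and the construction is visibly compatible with the informal slogan of \autoref{why_fugal} that $s^\flat$ is the universal way of forcing a Mealy output map to be compatible with the action structure on its domain; the universality itself is not needed here and will be addressed by \autoref{the_adjunzia}.
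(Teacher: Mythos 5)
Your proposal is correct and follows essentially the same route as the paper's proof: the same recursive definition of $s^\flat$, the same induction on the first word to verify \eqref{beheq} (with the induction hypothesis applied at the updated state), and the same induction to show that 2-cells of $\Mly_\Set(A,B)$ remain 2-cells after fugal extension. Nothing further is needed.
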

\begin{proof}
  \deferredRef{lemma_1}. In particular, the map $s^\flat$ is constructed inductively as
  \[\begin{cases}
      s^\flat(e, [\,])      & = [\,]                            \\
      s^\flat(e, a\kons as) & = s(e,a) \kons s^\flat(d(e,a),as)
    \end{cases}\]
  and it fits in the Mealy machine $\langle s^\flat,d^*\rangle : E\times A^* \to B^*\times E$ where $d^*$ is as in \eqref{mly_extension}. The proof that $\langle s^\flat,d^*\rangle$ is fugal in the sense of \eqref{beheq} can be done by induction and poses no particular difficulty.
\end{proof}
\begin{lemma}\label{lemma_2}
  Given sets $A,B$ there exists a commutative square
  \[
    \vxy{
      \bMly_\Set(A^*,B^*)\ar[r]\ar[d]& \Cat \ls B^* \ar[d]\\
      \opFib/A^* \ar[r]& \Cat.
    }
  \]
\end{lemma}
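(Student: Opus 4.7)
The plan is to exhibit the two functors out of $\bMly_\Set(A^*,B^*)$ that make the square commute, using the two halves of \autoref{why_fugal}: the discrete opfibration arising from the action $d$ gives the left vertical edge, while the fugal condition \eqref{beheq} gives the right vertical edge in the form of a functor to $B^*$. Concretely, on objects I send a fugal machine $(E,d,s)$ to:
\begin{itemize}
    \item the discrete opfibration $\pi_d : \clE[d]\to A^*$ of \autoref{cat_elem} (this gives $\bMly_\Set(A^*,B^*)\to\opFib/A^*$);
    \item the functor $\Sigma_{(d,s)} : \clE[d]\to B^*$ defined as identity on objects and sending a morphism $e\xrightarrow{m} d(e,m)$ to $s(e,m)\in B^*$, which is well\hyp{}defined precisely because \eqref{beheq} asserts that $\Sigma_{(d,s)}$ preserves composition (and it trivially preserves identities, since $s(e,[\,])$ is forced to be $[\,]$ by \eqref{beheq} with $m=m'=[\,]$). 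This gives $\bMly_\Set(A^*,B^*)\to\Cat\ls B^*$.
\end{itemize}
Both of these, postcomposed with the forgetful functor to $\Cat$, return the translation category $\clE[d]$, so the square commutes on objects on the nose.

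Next I need to verify functoriality and the commutativity on morphisms. A 2-cell $f:(E,d,s)\to (F,d',s')$ in $\bMly_\Set(A^*,B^*)$ is a function $f:E\to F$ with $d'(f(e),m)=f(d(e,m))$ and $s'(f(e),m)=s(e,m)$. Define $f_\sharp:\clE[d]\to\clF[d']$ by $e\mapsto f(e)$ on objects and $(e\xrightarrow{m} d(e,m))\mapsto (f(e)\xrightarrow{m} d'(f(e),m))$ on morphisms; the equation $d'(f(e),m)=f(d(e,m))$ makes this assignment well\hyp{}typed. Then $\pi_{d'}\circ f_\sharp=\pi_d$ (both send a morphism to its label $m$), which shows $f_\sharp$ is a morphism in $\opFib/A^*$, and indeed an opCartesian functor since both sides are discrete opfibrations. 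Similarly $\Sigma_{(d',s')}\circ f_\sharp=\Sigma_{(d,s)}$ holds on morphisms because $s'(f(e),m)=s(e,m)$, so $f_\sharp$ descends to a morphism in $\Cat\ls B^*$. Functoriality of $(\firstblank)_\sharp$ with respect to composition and identities in $\Mly_\Set$ is inherited from functoriality of the translation category construction.

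Thus both downward composites agree, the square commutes strictly, and we have the desired pair of functors. The only nontrivial verification is that $\Sigma_{(d,s)}$ is functorial, which reduces to \eqref{beheq} evaluated at a chain of composable morphisms in $\clE[d]$; I expect this to be the main (and essentially only) substantive check, while everything else is bookkeeping. Once this is in place, the lemma follows immediately, and by \autoref{up_4_MAC} the universal property of the pullback will produce the comparison functor $\bMly_\Set(A^*,B^*)\to\Mac(A^*,B^*)$ that is the workhorse of the biadjunction announced in \autoref{the_adjunzia}.
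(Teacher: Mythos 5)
Your proposal is correct and matches the paper's own argument: both construct the span $A^* \twoheadleftarrow \clE[d^*] \to B^*$ from the translation category of $d$ and the functor $\Sigma$ of \autoref{why_fugal}, and define the two functors as the projections onto the legs. You simply spell out the functoriality and commutativity checks (and the identity-preservation of $\Sigma$) that the paper leaves implicit.
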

\begin{proof}[Proof of \autoref{lemma_2}]\label{proof_of_lemma_2}
  Given a fugal Mealy machine $\langle s,d\rangle : E \times A^* \to B^*\times E$ between free monoids, from the action $d$ we obtain a discrete opfibration $\clE[d]\to A^*$, and from the map $s : E \times A^* \to B^*$ we obtain a functor $\Sigma : \clE[d^*]\to B^*$ as in \autoref{why_fugal}. So, one can obtain a span
  \[\vxy{A^* & \ar@{->>}[l]_D \clE[d^*] \ar[r]^\Sigma & B^*}\label{eq:proof_lemma_2}\]
  where the leg $D : \clE[d^*] \to A^*$ is as in \autoref{cat_elem} and $\Sigma$ is an in \autoref{why_fugal}. The functors $\opFib/A^*\leftarrow \bMly_\Set(A^*,B^*)\to \Cat\ls B^*$ project to each of the two legs.
\end{proof}
\begin{corollary}\label{the_maps}
  The universal property of the hom\hyp{}categories $\Mac(\clA,\clB)$ exposed in \autoref{up_4_MAC} yields the right\hyp{}most functor in the composition
  \[\vxy[@C=1.5cm]{
    \Gamma_{A,B} : \Mly_\Set(A,B) \ar[r]^-{(\firstblank)_{A,B}^\flat} & \bMly_\Set(A^*,B^*) \ar[r]^-{\Pi_{A,B}} & \Mac(A^*,B^*)
    }\]
\end{corollary}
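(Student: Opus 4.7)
The statement is essentially a matter of assembling two pieces already in hand. The plan is to produce the composite functor by first invoking the fugal extension functor $(\firstblank)^\flat_{A,B}$ of Lemma~\ref{lemma_1}, then constructing the functor $\Pi_{A,B}\colon \bMly_\Set(A^*,B^*) \to \Mac(A^*,B^*)$ by the universal property of the strict pullback in Remark~\ref{up_4_MAC}, and finally composing the two.

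First, I would recall from Remark~\ref{up_4_MAC} that $\Mac(A^*,B^*)$ is the strict pullback in $\Cat$ of the cospan $\opFib/A^* \to \Cat \leftarrow \Cat\ls B^*$, whose forgetful legs send a span $\clA \twoheadleftarrow \clE \to \clB$ to its underlying discrete opfibration and its underlying functor into $\clB$ respectively. To supply a functor into such a pullback from $\bMly_\Set(A^*,B^*)$, it suffices to exhibit a commutative square whose outer edges land in the two cospan corners and agree after projection to $\Cat$. But this is exactly what Lemma~\ref{lemma_2} provides: to a fugal Mealy machine $\langle s,d\rangle\colon E\times A^* \to B^*\times E$ one assigns the span in~\eqref{eq:proof_lemma_2}, the discrete opfibration leg $D\colon \clE[d^*]\to A^*$ coming from Remark~\ref{cat_elem} and the functor $\Sigma\colon \clE[d^*]\to B^*$ coming from the fugality condition as explained in Remark~\ref{why_fugal}. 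Both project to the same underlying category $\clE[d^*]$, so the square commutes (strictly) and induces, by the 1-dimensional universal property of the pullback, a unique functor $\Pi_{A,B}\colon \bMly_\Set(A^*,B^*) \to \Mac(A^*,B^*)$.

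Second, I would compose with the fugal extension $(\firstblank)^\flat_{A,B}\colon \Mly_\Set(A,B) \to \bMly_\Set(A^*,B^*)$ of Lemma~\ref{lemma_1}, defining
\[
\Gamma_{A,B} := \Pi_{A,B} \circ (\firstblank)^\flat_{A,B}.
\]
Both factors are functors (the former by Lemma~\ref{lemma_1}, the latter by the universal property just invoked), hence so is their composite.

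There is really no genuine obstacle: the only care needed is to verify that the assignment $\langle s,d\rangle \mapsto (\clE[d^*]; D, \Sigma)$ is functorial on 2-cells, i.e. that a morphism $f\colon (E,d,s)\to (F,d',s')$ of fugal Mealy machines induces an opCartesian functor $\clE[d^*]\to \clF[d'^*]$ strictly commuting with $D,D'$ and with $\Sigma,\Sigma'$ in the sense of diagram~\eqref{qua}. The equations $d'\circ(f\times A^*) = f\circ d$ and $s'\circ(f\times A^*) = s$ from Remark~\ref{mmach_2cells} translate directly into strict commutativity of the two triangles, and opCartesianness is automatic because $D,D'$ are discrete opfibrations. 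This is the only item worth checking carefully; once in place, the corollary follows.
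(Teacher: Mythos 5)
Your proposal is correct and matches the paper's (largely implicit) argument: the paper likewise obtains $\Pi_{A,B}$ by feeding the commutative square of \autoref{lemma_2} into the strict pullback of \autoref{up_4_MAC} and then postcomposing with the fugal extension of \autoref{lemma_1}. Your added check that morphisms of fugal machines induce opCartesian functors between translation categories (automatic for discrete opfibrations) is a sensible explicit verification of a point the paper leaves tacit.
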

\begin{lemma}[Fugal extension preserves composition]\label{is_beh}
  Let $A,B,C$ be sets, $s_1 : E \times A \to B$ and $s_2 : F\times B\to C$ parts of Mealy machines $\langle s_1,\firstblank\rangle$ and $\langle s_2,\firstblank\rangle$; then $(s_2\c s_1)^\flat = s_2^\flat\c s_1^\flat$.
\end{lemma}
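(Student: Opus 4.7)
The plan is to verify the identity pointwise on an arbitrary element $((f,e), as) \in (F\times E) \times A^*$ by induction on the length of the list $as$. Both sides are functions of type $(F\times E)\times A^* \to C^*$, so this is the natural induction to run.

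For the base case $as = [\,]$, the defining clause of the fugal extension gives $(s_2\c s_1)^\flat((f,e),[\,]) = [\,]$, while on the other side $s_1^\flat(e,[\,]) = [\,]$ and then $s_2^\flat(f,[\,]) = [\,]$, so $(s_2^\flat \c s_1^\flat)((f,e),[\,]) = [\,]$ as well.

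For the inductive step $as = a\kons as'$, I would unfold both sides with the cons\hyp{}clause of the definition. On the right, using the composition formula \eqref{d2d1_term}, we obtain
\[(s_2\c s_1)^\flat((f,e),a\kons as') = s_2(f,s_1(e,a)) \kons (s_2\c s_1)^\flat\bigl((d_2(f,s_1(e,a)),d_1(e,a)),as'\bigr).\]
On the left, unfolding $s_1^\flat$ first and then $s_2^\flat$ gives
\[(s_2^\flat\c s_1^\flat)((f,e),a\kons as') = s_2(f,s_1(e,a)) \kons s_2^\flat\bigl(d_2(f,s_1(e,a)),\, s_1^\flat(d_1(e,a),as')\bigr),\]
and the second summand is by definition $(s_2^\flat\c s_1^\flat)((d_2(f,s_1(e,a)),d_1(e,a)),as')$ applied with the updated state. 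Hence the two tails coincide by the inductive hypothesis applied to the shorter list $as'$ and the new state $(d_2(f,s_1(e,a)),d_1(e,a))$, and the heads are manifestly equal.

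The only mild obstacle is bookkeeping: one must notice that the state threaded by $s_2^\flat\c s_1^\flat$ down the list $as'$ is precisely the paired state $(F\times E)$ that $(s_2\c s_1)^\flat$ threads, which follows from the definition of the composite transition map $d_2\c d_1$ in \eqref{d2d1_term}. Once this identification is made, the induction is wholly mechanical. I would note in passing that the statement implicitly uses the full data $\langle s_i,d_i\rangle$ of the Mealy machines — since $(\cdot)^\flat$ depends on the transition parts — but the equation only concerns the output components, as stated.
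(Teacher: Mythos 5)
Your proposal is correct and follows essentially the same route as the paper's own proof: induction on the length of the input word, unfolding both composites via the cons\hyp{}clause of $(\firstblank)^\flat$ and the formula \eqref{d2d1_term}, with the heads agreeing on the nose and the tails agreeing by the inductive hypothesis at the updated paired state. The only (harmless) cosmetic differences are that you swap which side of the equation you call ``left'' and ``right'' and order the composite state as $(f,e)$ where the paper writes $(e,f)$.
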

\begin{proof}
  \deferredRef{is_beh}.\footnote{The argument is straightforward but tedious (the difficult part is that the condition to verify on $(s_2\c s_1)^\flat$ involves $d_2\c d_1$, the expression of which we recall from \eqref{d2d1_term}, is the $\lambda$-term $\lambda efa.\langle d_2(f,s_1(e,a)),d_1(e,a)\rangle$).}
\end{proof}
This, together with the fact that the identity 1-cell $1\times A\to A\times 1$ is fugal (the proof is straightforward), yields that there exists a 2-subcategory $\bMly_\Set$ of $\Mly_\Set$ where 0-cells are monoids, 1-cells are the $\langle s,d\rangle$ where $s$ is fugal in the sense of \autoref{def:beh_mealy}, and we take all 2-cells.
\begin{theorem}\label{fun_mly_mac}
  The maps $\Gamma_{A,B}$ of \autoref{the_maps} constitute the action on 1-cells of a 2-functor $\Gamma : \twoMly_\Set \to \Mac$. More precisely, there are 2-functors $(\firstblank)^\flat : \twoMly_\Set \to \bMly_\Set$ and $\Pi : \bMly_\Set\to\Mac$ whose composition is $\Gamma$.
\end{theorem}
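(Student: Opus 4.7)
The plan is to construct the two 2-functors $(\firstblank)^\flat : \twoMly_\Set \to \bMly_\Set$ and $\Pi : \bMly_\Set \to \Mac$ separately and observe that their composition on hom-categories reproduces $\Gamma_{A,B}$ by the definition in \autoref{the_maps}.

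For $(\firstblank)^\flat$, the action on $0$-cells is $A \mapsto A^*$; on hom-categories it is the fugal extension of \autoref{lemma_1}; on a $2$-cell $f : (E,d,s) \to (F,d',s')$ it is the same underlying map $f : E \to F$, now regarded as a morphism between the fugal extensions. That $f$ remains a morphism of the extended machines $\langle s^\flat, d^*\rangle$ follows from a routine induction on word length using the recursive clauses of $s^\flat$ and of $d^*$. Preservation of the identity $1$-cell $\langle \pi_A, !\rangle : 1 \times A \to A \times 1$ at $A$ follows from those same inductive formulas, which yield $s^\flat(\ast, a_1 \kons \cdots \kons a_n) = a_1 \kons \cdots \kons a_n$, so that its fugal extension is the identity $1$-cell at $A^*$ in $\bMly_\Set$; preservation of $1$-cell composition is exactly \autoref{is_beh}; and $2$-cell composition (vertical and horizontal) is preserved because the underlying state map is unchanged.

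For $\Pi$, the action on $0$-cells identifies a monoid with the corresponding one-object category; on hom-categories it is the span construction of \autoref{lemma_2}; on a $2$-cell $f : (E,d,s) \to (F,d',s')$ of fugal machines it is the functor $\clE[d] \to \clF[d']$ sending $e \mapsto f(e)$ and the arrow labelled by $m$ out of $e$ to the arrow labelled by $m$ out of $f(e)$. This is well-defined because $d'(f(e),m) = f(d(e,m))$, it is opCartesian since morphisms are preserved on the nose, and it commutes with both legs of the span by the two defining conditions on $f$. The crucial verification is preservation of $1$-cell composition: for composable fugal machines $\langle s_1,d_1\rangle : E \times A^* \to B^* \times E$ and $\langle s_2,d_2\rangle : F \times B^* \to C^* \times F$, the pullback $\clE[d_1] \times_{B^*} \clF[d_2]$ in $\Cat$ has objects $(e,f) \in E \times F$ and, for each $a \in A^*$, a single morphism $(e,f) \to (d_1(e,a), d_2(f, s_1(e,a)))$; this is precisely the category of elements of the composite action $d_2 \c d_1$ on $F \times E$ as expressed in \eqref{d2d1_term}, and the right leg of the span matches the functor induced by $s_2 \c s_1$.

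The main obstacle is this last pullback identification: it is elementary but requires carefully tracking how the components of $d_2 \c d_1$ and $s_2 \c s_1$ interleave their arguments, the interleaving being exactly what makes the fugality condition stable under composition. Everything else either is immediate or reduces to the lemmas already in hand.
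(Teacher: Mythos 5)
Your proposal is correct and follows essentially the same route as the paper: decompose $\Gamma$ as $\Pi\circ(\firstblank)^\flat$, use \autoref{is_beh} for compatibility of the fugal extension with 1-cell composition, and identify the pullback of the translation categories $\clE[d_1^*]\times_{B^*}\clF[d_2^*]$ with the translation category of the composite action $d_2\c d_1$, checking that the right leg agrees with the functor induced by $s_2\c s_1$. One small omission: for preservation of 1-cell composition by $(\firstblank)^\flat$ you invoke only \autoref{is_beh}, which covers the output maps, but one also needs the companion identity $d_2^*\c d_1^*=(d_2\c d_1)^*$ for the dynamics --- this is precisely the identity the paper's proof singles out as the remaining thing to verify (it follows, e.g., from uniqueness of the extension of $d_2\c d_1$ to a monoid action of $A^*$, using fugality of $s_1^\flat$ to see that $d_2^*\c d_1^*$ is indeed an action), and it is not literally contained in \autoref{is_beh}.
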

\begin{proof}
  \deferredRef{fun_mly_mac}.
\end{proof}
\begin{theorem}\label{the_adjunzia}
  The 2-functor $(\firstblank)^\flat : \twoMly_\Set \to \bMly_\Set$ admits a right 2-adjoint; the 2-functor $\Pi : \bMly_\Set\to\Mac$ identifies $\bMly_\Set$ as the 1-full and 2-full subcategory of $\Mac$ spanned by monoids.
\end{theorem}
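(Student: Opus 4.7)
The strategy is to handle the two claims independently, using the 2-categorical structure already established.

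\textbf{The right 2-adjoint.} I would define $U : \bMly_\Set \to \twoMly_\Set$ on 0-cells by $M \mapsto |M|$ (the underlying set), on 1-cells by sending a fugal Mealy machine $\langle s,d\rangle : E\times M \to N\times E$ to itself viewed as an ordinary Mealy machine between $|M|$ and $|N|$ (the fugality equation \eqref{beheq} simply forgotten), and on 2-cells by the identity on underlying state morphisms. The task is then to exhibit, naturally in a set $A$ and a monoid $M$, an isomorphism of hom-categories $\bMly_\Set(A^*, M) \cong \twoMly_\Set(A, |M|)$. The functor from right to left generalises \autoref{lemma_1}: given $\fke = \langle s,d\rangle : E\times A \to |M|\times E$, build $\fke^{\flat_M} = \langle s^{\flat_M}, d^*\rangle : E\times A^* \to M\times E$ using the inductive recipe of \autoref{lemma_1} but replacing the free-monoid multiplication in $B^*$ by the multiplication of $M$; associativity in $M$ makes $s^{\flat_M}$ fugal. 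The functor from left to right restricts along the monoid unit $A \hookrightarrow A^*$. One round-trip is the identity because $s^{\flat_M}(e,[a]) = s(e,a)\cdot 1_M = s(e,a)$; the other because the fugality equation uniquely determines any fugal $\langle s',d'\rangle$ from its values on single letters, by inductive application of \eqref{beheq} as in \autoref{why_fugal}. Naturality in $A$ and $M$ is a direct consequence of \autoref{fun_mly_mac}, yielding a strict 2-adjunction $(\firstblank)^\flat \dashv U$.

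\textbf{The embedding $\Pi$.} To see that $\Pi$ identifies $\bMly_\Set$ with the 1- and 2-full sub-2-category of $\Mac$ spanned by one-object categories, I would exhibit the inverse of $\Pi_{M,N} : \bMly_\Set(M,N) \to \Mac(M,N)$ on hom-categories. A 1-cell of $\Mac(M,N)$ is a span $M \leftarrow \clE \to N$ where $p : \clE \to M$ is a discrete opfibration over the one-object category $M$; by \autoref{cat_elem}, $p$ is classified by an $M$-action $d : E\times M\to E$ on $E := \mathrm{Ob}\,\clE$, and every morphism of $\clE$ is uniquely of the form $e\to d(e,m)$. Consequently $S : \clE \to N$ is entirely determined by its effect on morphisms, i.e.\ by a map $s : E\times M\to N$ with $s(e,m) := S(e\to d(e,m))$; functoriality of $S$ on $e\to d(e,m)\to d(e,m\cdot m')$ is precisely the fugality equation \eqref{beheq}, so $s$ is fugal. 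At the level of 2-cells, opCartesian functors between discrete opfibrations of one-object categories reduce to maps between their underlying $M$-sets, which are exactly morphisms of fugal Mealy machines. Both constructions are strict, so $\Pi_{M,N}$ is an isomorphism of categories uniformly in $M,N$, and on 0-cells $\Pi$ is the standard delooping $M \mapsto BM$, biessentially surjective onto one-object categories.

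\textbf{Anticipated difficulty.} The only non-cosmetic step is checking that $\Pi$ respects horizontal composition, since the two bicategories compose 1-cells in superficially different ways. This reduces to the observation that the pullback in $\Cat$ of two discrete opfibrations of monoid categories has object set $F\times E$ with composite action $(f,e)\cdot m = (d_2(f,s_1(e,m)), d_1(e,m))$ and output $s_2(f,s_1(e,m))$, matching \eqref{d2d1_term} on the nose. Once this verification is in hand, both halves of the statement follow without further subtlety: the inverse equivalences constructed above are compatible with composition, and the 2-adjunction $(\firstblank)^\flat \dashv U$ in fact arises from isomorphisms of hom-categories.
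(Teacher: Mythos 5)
Your proposal follows essentially the same route as the paper's: the hom-equivalence is given by restriction along the unit $A\hookrightarrow A^*$ in one direction and by fugal extension followed by the counit $(UM)^*\to M$ (your $s^{\flat_M}$ is exactly this composite, unwound) in the other, and the identification of $\bMly_\Set$ with the monoid-spanned sub-bicategory of $\Mac$ rests on the same dictionary between $M$-actions, discrete opfibrations over the one-object category $M$, and functors out of the translation category, with the pullback computation matching \eqref{d2d1_term}. The one point both your argument and the paper's gloss over is that for a fugal machine valued in a \emph{non-free} monoid $M$, equation \eqref{beheq} only forces $s(e,[\,])$ to be an idempotent of $M$ rather than $1_M$, which is what the round-trip $KH\cong\mathrm{id}$ (your ``uniquely determined by its values on single letters'') actually requires; apart from this shared wrinkle the two proofs coincide.
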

\begin{proof}
  \deferredRef{the_adjunzia}. The last statement essentially follows from \eqref{eq:proof_lemma_2}: the span $(D,\Sigma)$ is essentially equivalent to the fugal Mealy machine $\langle s,d\rangle$, since its left leg $D$ determines a unique action of $A^*$ on the set of objects $\clE[d^*]_0$, and $\Sigma$ and $s$ are mutually defined.
\end{proof}

\section{Bicategory-valued machines}\label{sec:bicat_mach}
A monoidal category is just a bicategory with a single 0-cell; then, do \autoref{mmach_1cells} and \autoref{moore_1cells} admit a generalisation when instead of $\clK$ we consider a bicategory $\bbB$ with more than one object? The present section answers in the positive. We also outline how, passing to automata valued in a bicategory, a seemingly undiscovered way to define morphisms between automata, different (from \eqref{mmach_2cells} and) from the categories of `variable' automata described in \cite[§11.1]{Ehrig}: we study this notion in \autoref{new_mor}.

In our setting, `automata' become diagrams of \emph{2-cells} in $\bbB$, between input, output and state \emph{1-cells}, in contrast with previous studies where automata appeared as objects, and with \cite{rosebrugh_sabadini_walters_1998} (and our \autoref{sec:bicats}), where they appear as diagrams of 1-cells between input, output and state 0-cells. This perspective suggests that 2-dimensional diagrams of a certain shape can be thought of as state machines -so, they carry a computational meaning; but also that state machines can be fruitfully interpreted as diagrams: in \autoref{in_rel} we explore definitions of an automaton where input and output are \emph{relations}, or functors (in \autoref{in_cat}), or profunctors (in \autoref{in_prof}); universal objects that can be attached to the 2-dimensional diagram then admit a computational interpretation (cf. \eqref{reach_eq} where a certain Kan extension resembles a `reachability' relation).

This idea is not entirely new: it resembles an approach contained in \cite{Bainbridge1975addressed,bainbridge1972unified} where the author models the state space of abstract machines as a functor, of which one can take the left/right Kan extension along an `input scheme'. However, Bainbridge's works are rather obscure (and quite ahead of their time), so we believe we provide some advancement to state of the art by taking his idea seriously and carrying to its natural development --while at the same time, providing concrete examples of bicategories in which inputs/outputs automata can be thought of as 1-cells, and investigating the structure of the class of all such automata as a global object.
\begin{definition}
  Adapting from \autoref{mmach_1cells} \emph{verbatim}, if $\bbB$ is a bicategory with 0-cells $A,B,X,Y,\dots$, 1-cells $i : A\to B, o : X\to Y,\dots$ and 2-cells $\alpha,\beta,\dots$ the kind of object we want in $\Mly_\bbB(i,o)$ is a span of the following form:
  \[\vxy{e & \ar@{=>}[l]_\delta e\circ i \ar@{=>}[r]^\sigma & o}\label{bica_mealy}\]
  for 1-cells $i : X\to Y$, $e : A\to B$, $o : C\to D$. Note that with $\agdacirc$, we denote the composition of 1-cells in $\bbB$, which becomes a monoidal product in $\bbB$ has a single 0-cell.
\end{definition}
\begin{remark}\label{must_be_endo}
  The important observation here is that the mere existence of the span $(\delta,\sigma)$ `forces the types' of $i,o,e$ in such a way that $i$ \emph{must} be an endomorphism of an object $A\in\bbB$, and $e,o : A\to B$ are 1-cells. Interestingly, these minimal assumptions required even to consider an object like \eqref{bica_mealy} make iterated compositions $i\circ \dots\circ i$ as meaningful as iterated tensors $I\otimes \dots\otimes I$, and in fact, the two concepts coincide when $\bbB$ has a single object $*$ and hom\hyp{}category $\bbB(*,*)=\clK$.

  In the monoidal case, the fact that an input 1-cell stands on a different level from an output was completely obscured by the fact that \emph{every} 1-cell is an endomorphism.
\end{remark}
Let us turn this discussion into a precise definition.
\begin{definition}[Bicategory-valued Mealy machines]\label{def:bicatmealy}
  Let $\bbB$ be a bicategory, and fix two 1-cells $i : A\to A$ and $o : A\to B$; define a category $\Mly_\bbB(i,o)$ as follows:
  \begin{enumtag}{bml}
    \item the objects are diagrams of 2-cells as in \eqref{bica_mealy};
    \item the morphisms $(e,\delta,\sigma)\to (e',\delta',\sigma')$ are 2-cells $\varphi : e\To e'$ subject to conditions similar to \autoref{mmach_2cells}:
    \begin{itemize}
      \item $\sigma'\circ (\varphi * i) = \sigma$;
      \item $\delta'\circ (\varphi * i) = \varphi \circ\delta$.
    \end{itemize}
  \end{enumtag}
\end{definition}
\begin{definition}[Bicategory-valued Moore machines]\label{def:bicatmoore}
  Define a category $\Mre_\bbB(i,o)$ as follows:
  \begin{enumtag}{bmo}
    \item\label{bmo_1} the objects are pairs of 2-cells in $\bbB$, $\delta : e\circ i \To e$ and $\sigma : e\To o$;
    \item\label{bmo_2} the morphisms $(e,\delta,\sigma)\to (e',\delta',\sigma')$ are 2-cells $\varphi : e\To e'$ such that diagrams of 2-cells similar to those in \autoref{def:bicatmealy} are commutative.
  \end{enumtag}
\end{definition}
\begin{notation}
  In the following, an object of $\Mly_\bbB(i,o)$ (resp., $\Mre_\bbB(i,o)$) will be termed a \emph{bicategorical Mealy machine} (resp., a \emph{bicategorical Moore machine}) of input cell $i$ and output cell $o$, and the objects $A,B$ are the \emph{base} of the bicategorical Mealy machine $(e,\delta,\sigma)$. To denote that a bicategorical Mealy machine is based on $A,B$ we write $(e,\delta,\sigma)_{A,B}$.
\end{notation}
In \cite{Bainbridge1975addressed} the author models the state space of abstract machines as follows: fix categories $A,X,E$ and a functor $\Phi : X\to A$, of which one can take the left/right Kan extension along an `input scheme' $u : E\to X$; a \emph{machine with input scheme $u$} is a diagram of 2-cells in $\Cat(E,A)$ of the form $\clM = (I \To \Phi\circ u\To J)$, and the \emph{behaviour} $B(\clM)$ of $\clM$ is the diagram of 2-cells $\Lan_u I \To \Phi\To \Ran_u J$.

All this bears some resemblance to the following remark, but at the same time looks very mysterious, and not much intuition is given in \emph{op. cit.} for what the approach in study means; we believe our development starts from a similar point (the intuition that a category of machines is, in the end, some category of diagrams --a claim we substantiate in \autoref{mre_as_diag}) but rapidly takes a different turn (cf. \autoref{new_mor}), and ultimately gives a cleaner account of Bainbridge's perspective (see also \cite{bainbridge1972unified} of the same author).
\begin{remark}[Behaviour as a Kan extension]\label{as_a_kan}
  A more convenient depiction of the span in \ref{bmo_1} will shed light on our \autoref{def:bicatmealy} and \ref{def:bicatmoore}, giving in passing a conceptual motivation for the convoluted shape of finite products in $\Mre_\clK(I,O)$ and $\Mly_\clK(I,O)$ (cf. \cite[Ch. 11]{Ehrig}): a bicategorical Moore machine in $\bbB$ of fixed input and output $i,o$ consists of a way of filling the dotted arrows in the diagram
  \[\label{mach_as_kan}\vxy{
    &A\ar[dl]_i\ar@{.>}[dr]_e\druppertwocell^o{^\sigma}&\\
    A\ar@{.>}[rr]_e&\utwocell<\omit>{\delta}&B    }\]
  with $e : A\to B$ and two 2-cells $\delta,\sigma$. But then the `terminal way' of filling such a span can be characterised by the right extension of the output object along a certain 1-cell obtained from the input $i$. Let us investigate how.

  First of all, we have to assume something on the ambient hom\hyp{}categories $\bbB(A,A)$, namely that each of these admits a left adjoint to the forgetful functor
  \[\vxy{\bbB(A,A)\ar[r] & \cate{Mnd}_{/A}}\]
  (cf. \cite[Ch. II]{dubuc}) so that every endo-1-cell $i : A\to A$ has an associated extension to an endo-1-cell $i^\natural : A\to A$ with a unit map $i\To i^\natural$ that is initial among all 2-cells out of $i$ into a monad in $\bbB$; $i^\natural$ is usually called the \emph{free monad} on $i$.
\end{remark}
\begin{construction}\label{la_ran}
  Now, fix $i,o$ as in \autoref{def:bicatmoore}; we claim that the terminal object of $\Mre_\bbB(i,o)$ is obtained as the right extension in $\bbB$ of the output $o$ along $i^\natural$. We can obtain
  \begin{itemize}
    \item from the unit $\boldsymbol{\eta} : \id_A\To i^\natural$ of the free monad on $i$, a canonical modification $\Ran_i\To\Ran_{\id} =\id_A$, with components at $o$ given by 2-cells $\sigma : \Ran_io \To o$; this is a choice of the right leg for a diagram like \ref{bmo_1};
    \item from the multiplication $\boldsymbol{\mu} : i^\natural\circ i^\natural\To i^\natural$ of the free monad on $i$, a canonical modification $\Ran_{i^\natural}\To\Ran_{i^\natural}\circ \Ran_{i^\natural}$, whose components at $o$ mate to a 2-cell $\delta_0 : \Ran_{i^\natural}o\circ i^\natural \To \Ran_{i^\natural}o$; the composite
          \[\vxy[@C=1.5cm]{\delta : \Ran_{i^\natural}o\circ i \ar@{=>}[r]^-{\Ran_{i^\natural}o * \boldsymbol{\eta}} & \Ran_{i^\natural}o\circ i^\natural \ar@{=>}[r] & \Ran_{i^\natural}o}\label{raneq}\]
          The left leg is now chosen for a diagram like \ref{bmo_1}.
  \end{itemize}
  Together, $(\Ran_{i^\natural}o,\delta,\sigma)$ is a bicategorical Mealy machine, and the universal property of the right Kan extension says it is the terminal such. A similar line of reasoning yields the same result for $\Mly_\bbB(i,o)$, only now $\sigma$ is the 2-cell obtained as mate of $\epsilon\circ (\Ran_{i^\natural}o * \boldsymbol{\eta}) : \Ran_{i^\natural}o \circ i \To \Ran_{i^\natural}o \circ i^\natural \To o$ from the counit of $\firstblank\circ i^\natural\dashv \Ran_{i^\natural}$.
  \begin{proposition}[$\Mre_\bbB(i,o)$ and $\Mly_\bbB(i,o)$ as categories of diagrams.]\label{mre_as_diag}
    There exists a 2-category $\clP$ and a pair of strict 2-functors $W,G : \clP \to \bbB$ such that bicategorical Moore machines with `variable output 1-cell' i.e. the 2-dimensional diagrams like in \eqref{mach_as_kan} where $o$ is variable, can be characterised as natural transformations $W\To G$.
  \end{proposition}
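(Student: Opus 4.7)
The plan is to reverse-engineer the 2-category $\clP$ and the strict 2-functors $W,G$ so that, upon unpacking the definition of a (suitably oriented) natural transformation $\alpha : W \To G$, one recovers exactly the data of a bicategorical Moore machine with variable output 1-cell. Concretely, I would take $\clP$ to be the (locally discrete) 2-category freely generated by two 0-cells $\mathtt{E}, \mathtt{O}$, one 1-cell $d_0 : \mathtt{E}\to\mathtt{E}$, and one 1-cell $s_0 : \mathtt{O}\to\mathtt{E}$, with no non-identity 2-cells. The 2-functors $W,G : \clP \to \bbB$ are constant on 0-cells, $W(\mathtt{E})=W(\mathtt{O})=A$ and $G(\mathtt{E})=G(\mathtt{O})=B$, with $W(d_0)=i$, $W(s_0)=\id_A$, and $G(d_0)=G(s_0)=\id_B$.

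With these choices, an oplax natural transformation $\alpha : W \To G$ (whose naturality 2-cells have shape $\alpha_y\circ W(f)\To G(f)\circ \alpha_x$ for $f:x\to y$) consists of a pair of 1-cells $\alpha_\mathtt{E},\alpha_\mathtt{O} : A\to B$ together with 2-cells $\alpha_{d_0} : \alpha_\mathtt{E}\circ i \To \alpha_\mathtt{E}$ and $\alpha_{s_0} : \alpha_\mathtt{E} \To \alpha_\mathtt{O}$. Setting $e := \alpha_\mathtt{E}$, $o := \alpha_\mathtt{O}$, $\delta := \alpha_{d_0}$, $\sigma := \alpha_{s_0}$ exhibits the desired bijection with Moore machines: since $\clP$ has only identity 2-cells and its generating 1-cells are distinct, no additional coherence is imposed, and the naturality 2-cells at composite 1-cells like $d_0^n$ or $d_0^n\circ s_0$ are freely determined by $\alpha_{d_0},\alpha_{s_0}$ via the standard oplax composition and identity axioms.

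The main obstacle is choosing the directions correctly: we need both $\delta : e\circ i \To e$ and $\sigma : e\To o$ displayed with source on the left. This forces us to orient $s_0 : \mathtt{O}\to\mathtt{E}$ from output to state (rather than the reverse) and to adopt oplax, not lax, naturality, so that the naturality 2-cell at $s_0$ becomes $\alpha_\mathtt{E}\To\alpha_\mathtt{O}$ and not its opposite; any other combination of orientation and variance reverses one of the two 2-cells. Finally, modifications of oplax natural transformations consist of pairs of 2-cells $\Xi_\mathtt{E} : e\To e'$ and $\Xi_\mathtt{O} : o\To o'$ satisfying the coherence with $\alpha_{d_0}, \alpha_{s_0}$; unpacking this coherence reproduces precisely the commutative squares for a morphism of variable-output Moore machines in \autoref{def:bicatmoore}, promoting the bijection on objects to a 2-categorical characterization.
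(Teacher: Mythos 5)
Your construction is correct in substance, but it takes a genuinely different route from the paper's. You take $\clP$ to be the free (locally discrete) 2-category on a two-object graph with edges $d_0:\mathtt{E}\to\mathtt{E}$ and $s_0:\mathtt{O}\to\mathtt{E}$, map it into $\bbB$ itself, and read a machine off as an \emph{oplax} natural transformation: the components at the generators give $e,o,\delta,\sigma$, freeness guarantees that no further coherence conditions arise, and modifications recover exactly the morphisms of \autoref{def:bicatmoore}. The paper instead takes $\clP$ to be the 2-discrete ``generic double span'' (objects $0,1,2$, two parallel arrows $0\to 1$ and two parallel arrows $0\to 2$, no composable non-identity pairs) and lands $W,G$ in $\Cat$ rather than in $\bbB$: $G$ is constant at the hom-category $\clK(A,B)$ with one pair of legs given by $\id$ and $\firstblank\circ i$, $W$ picks out the interval category $\{0\to 1\}$ and the discrete two-object category, and a \emph{strict} natural transformation $W\To G$ then selects the objects $e,o$ and the arrows $\sigma,\delta$ of the hom-category. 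Your version is more intrinsic to $\bbB$ and sits naturally next to the $\Omega\bbB=\cate{Psd}(\bfN,\bbB)$ lax-cylinder picture of Katis--Sabadini--Walters invoked elsewhere in the paper, and it yields the 2-cells for free; the paper's version keeps all naturality strict, so no (op)laxness or coherence bookkeeping is needed, at the price of passing through $\Cat$.

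Two caveats on your side, neither fatal. First, since your $\clP$ contains non-trivial composites ($d_0\circ d_0$, $d_0\circ s_0$, \dots), your $W$ and $G$ can be \emph{strict} 2-functors into $\bbB$ only when $\bbB$ is a strict 2-category; otherwise preservation of associativity on the nose fails and you must either weaken to pseudofunctors or strictify $\bbB$. (The paper sidesteps this by choosing a $\clP$ with no composable non-identity 1-cells, though its own proof then lands in $\Cat$ rather than $\bbB$, so the statement is loosely worded on both readings.) Second, your naturality 2-cells have codomains $\id_B\circ e$ and $\id_B\circ o$ rather than $e$ and $o$, so the identification with $\delta$ and $\sigma$ holds only up to composing with unitors; this should be said explicitly, but it is routine.
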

  \begin{proof}
    \deferredRef{mre_as_diag}.

    As explained therein, bicategorical Moore machines with fixed output $o$ can be characterised as particular such natural transformations that take value $o$ on one argument.

    Also, minor adjustments to the shape of $G$ yield a similar result for bicategorical Mealy machines.
  \end{proof}
\end{construction}
\begin{example}[Bicategorical machines in $\Cat$]\label{in_cat}
  Consider a span $\clC \xot I \clC \xto O \clD$ in the strict 2-category $\mymathbb{Cat}$ of categories, functors and natural transformations, where $\clD$ is a $\kappa$-complete category. The category $\Mre_{\Cat}(I,O)$ has objects the triples $(E,\delta,\sigma)$ where $E : \clC\to\clD$ is a functor and $\sigma,\delta$ are natural transformations arranged as in \eqref{mach_as_kan}; assuming enough limits in $\clD$, we can compute the action of the right Kan extension of $O$ along $I^\natural$ (the free monad on the endofunctor $I$, cf. \cite{kelly1980unified}, whose existence requires additional assumptions on $\clC$) on an object $C\in\clC$ as the equaliser
  \[\vxy{
    RC \ar[r]& \prod_{C\in\clC} OC^{\clC(A,I^\natural C)}\ar@<3pt>[r]\ar@<-3pt>[r]& \prod_{C\to B}OB^{\clC(A,I^\natural C)}
    }\label{the_end}\]
  or (better, cf. \cite[2.3.6]{coend-calcu}) as the end\footnote{Recall that if $S$ is a set and $C$ is an object of a category $\clC$ with limits, by $C^S$ we denote the \emph{power} of $C$ and $S$, i.e. the iterated product $\prod_{s\in S} C$ of as many copies of $C$ as there are elements in $S$.} $A\mapsto\int_C OC^{\clC(A,I^\natural C)}$, i.e. as the `space of fixpoints' for the conjoint action of the functor $O$ and of the presheaf $C\mapsto \clC(A,I^\natural C)$ on objects of $\clC$; the free monad $I^\natural$ sends an object $C$ to the initial algebra of the functor $A\mapsto C + IA$, so that $I^\natural C\cong C+I(I^\natural C)$.

  For the sake of simplicity, let us specialise the discussion when $\clD$ is the category of sets and functions: the input $I$ and the output $O$ of the state machine in \autoref{mmach_1cells} are now variable objects `indexed' over the objects of $\clC$, and the behaviour of the terminal machine can be described as a known object: unpacking the end \eqref{the_end} we obtain the functor
  \[\vxy{A \ar@{|->}[r] & [\clC,\Set](\clC(A,I^\natural\firstblank), O)}\]
  sending an object $A$ to the set of natural transformations $\alpha :\clC(A,I^\natural\firstblank)\To O$; the intuition here is that to each generalised $A$-element of $I^\natural C$ corresponds an element of the output space $\Upsilon_C(u)\in OC$, and that this association is natural in $C$.
\end{example}
\begin{example}[Bicategorical machines in profunctors]\label{in_prof}
  We can reason similarly in the bicategory of categories and profunctors of \cite{justesen1968bikategorien,benabou2000distributors,cattani2005profunctors}, \cite[Ch. 5]{coend-calcu}; now an endo-1-cell $I : \clC\to \clC$ on a category $\clC$ consists of an `extension' of the underlying graph of $U\clC$ to a bigger graph $(U\clC)^+$,\footnote{More precisely, to the underlying graph of $\clC$, made of `old' arrows, we adjoin a directed edge $e_x : C\to C'$ for each $x\in I(C,C')$.} and the free promonad $I^\natural$ (cf. \cite[§5]{Lack2008}) corresponds to the quotient of the free category on $(U\clC)^+$ where `old' arrows compose as in $\clC$, and `new' arrows compose freely; moreover, all right extensions $\langle P/Q\rangle : \clX\pro \clY$ of $Q : \clA\pro\clY$ along $P:\clA\pro \clX$ exist in the bicategory $\mymathbb{Prof}$, as they are computed as the end in \cite[5.2.5]{coend-calcu},
  \[\vxy{\langle P/Q\rangle : (X,Y)\ar@{|->}[r] & \int_A\Set(P(Y,A),Q(X,A)).}\]
\end{example}
\begin{example}[Bicategorical machines in relations]\label{in_rel}
  When it is instantiated in the (locally thin) bicategory of relations between sets, i.e. $\{0,1\}$-profunctors, given $I : A\pro A$, $O : A\pro B$, $I^\natural$ is the reflexive\hyp{}transitive closure of $I$, and the above Kan extension is uniquely determined as the maximal $E$ such that $E\subseteq O$ and $E\circ I^\natural \subseteq E$ (here $\circ$ is the relational composition). 
  So $R=\Ran_{I^\natural}O$ is the relation defined as
  \[(a,b)\in R \iff \forall a'\in A.((a',a) \in I^\natural \To (a',b)\in O).\]
  This relation expresses \emph{reachability} of $b$ from $a$: it characterises the sub\hyp{}relation of $O$ connecting those pairs $(a,b)$ for which, for every other $a'\in A$, if there is a finite path (possibly of length zero, i.e. $a=a'$) connecting $a',a$ through $I$-related elements, then $(a',b)\in O$. In pictures:
  \[\label{reach_eq}
    a \, R \, b \iff
    \Big(
    (a'=a) \lor (a' \xto I a_1 \xto I \dots \xto I a_n \xto I a)
    \To
    a'\, O \, b
    \Big)
  \]
  When the above example is specialised to the case when $A=*$ is a singleton, there are only two possible choices for $I$ (both reflexive and transitive), and $O$ identifies to a subset of $B$; a bicategorical Moore machine is then a subset $R\subseteq O$, and thus for both choices of $I$, $\Mre_{\Rel}(I,O)_{*,B} = 2^O$. One can reason in the same fashion for Mealy machines.
\end{example}
\subsection{Intertwiners between bicategorical machines}\label{subsec:inter}
In passing from $\Mly_\clK(I,O)$ to $\Mly_\bbB(i,o)$ we gain an additional degree of freedom by being able to index the category over pairs of 0-cells of $\bbB$, and this is particularly true in the sense that the definition of $\Mly_\bbB(i,o)$ and its indexing over pairs of objects $A,B$ of $\clK$ leads to a seemingly undiscovered way to define morphisms between automata:
\begin{definition}[Intertwiner between bicategorical machines]\label{new_mor}
  Consider two bicategorical Mealy machines $(e,\delta,\sigma)_{A,B}, (e',\delta',\sigma')_{A',B'}$ on different bases (so in particular $(e,\delta,\sigma)_{A,B}\in\Mly_\bbB(i,o)$ and $(e',\delta',\sigma')_{A',B'}\in\Mly_\bbB(i',o')$); an \emph{intertwiner} $(u,v) : (e,\delta,\sigma) \looparrowright (e',\delta',\sigma')$ consists of a pair of 1-cells $u : A\to A', v : B\to B'$ and a triple of 2-cells $\iota,\epsilon,\omega$ disposed as in \eqref{cellette},
  to which we require to satisfy the identities in \eqref{identizie} (we provide a `birdseye' view of the commutativities that we require, as \eqref{cellette} is unambiguous about how the 2-cells $\iota,\delta,\sigma,\epsilon,\omega$ can be composed).
\end{definition}
\begin{remark}\label{not_triv}
  Interestingly enough, when it is spelled out in the case when $\bbB$ has a single 0-cell, this notion does not reduce to \autoref{mmach_2cells}, as an intertwiner between a Mealy machine $(E,d,s)_{I,O}$ and another $(E',d',s')_{I',O'}$ consists of a pair of objects $U,V\in\clK$, such that
  \begin{enumtag}{ic}
    \item there exist morphisms
    $\iota : I'\otimes U\to V\otimes I, \epsilon : E'\otimes U\to V\otimes E, \omega : O'\otimes U\to V\otimes O$;
    \item the following two identities hold:
    \begin{gather*}
      \epsilon\circ (d'\otimes U) = (V\otimes d)\circ (\epsilon\otimes I)\circ (E'\otimes\iota)\\
      \omega \circ (s'\otimes U) = (V\otimes s)\circ (\epsilon\otimes I)\circ (E'\otimes\iota)
    \end{gather*}
  \end{enumtag}
\end{remark}
In the single\hyp{}object case, this notion does not trivialise in any obvious way, and --in stark contrast with the notion of morphism of automata given in \eqref{mmach_2cells}-- intertwiners between machines support a notion of higher morphisms \emph{even in the monoidal case}.
\begin{definition}[2-cell between machines]\label{anew_twocel}
  In the same notation of \autoref{new_mor}, let $(u,v),(u',v') : (e,\delta,\sigma) \looparrowright (e',\delta',\sigma')$ be two parallel intertwiners between bicategorical Mealy machines; a 2-cell $(\varphi,\psi) : (u,v)\To (u',v')$ consists of a pair of 2-cells $\varphi : u\To u'$, $\psi : v\To v'$ such that the identities in \eqref{diag_2} hold true.
\end{definition}
\begin{remark}
  When it is specialised to the monoidal case, \autoref{anew_twocel} yields the following notion: a 2-cell $(f,g) : (U,V)\To (U',V')$ as in \autoref{not_triv} consists of a pair of morphisms $f : U\to U'$ and $g : V\to V'$ subject to the conditions that the two squares in \eqref{diag_1} commute: intuitively speaking, in this particular case, the machine 2-cells correspond to pairs $(f,g)$ of $\clK$-morphisms such that both pairs $(E'\otimes I'\otimes f,E'\otimes f)$ and $(g\otimes E\otimes I,g\otimes E)$ form morphisms in the arrow category of $\clK$.
\end{remark}
\begin{remark}
Let $\bbB$ be a bicategory; in \cite{Katis1997} the authors exploit the universal property of a bicategory $\Omega\bbB=\cate{Psd}(\bfN,\bbB)$ as the category of pseudofunctors, lax natural transformations and modifications with domain the monoid of natural numbers, regarded as a single object category. The typical object of $\Omega\bbB$ is an endomorphism $i : A\to A$ of an object $A\in\bbB$, and the typical 1-cell consists of a lax commutative square
\[\vxy{
  A \ar[r]\ar[d]\drtwocell<\omit>{}& A \ar[d]\\
  B \ar[r]& B.
}\]
This presentation begs the natural question of whether there is a tautological functor $\Mly_\bbB\to \Omega\bbB$ given by `projection', sending $(i,o;(e,\delta,\sigma))$ into $i$; the answer is clearly affirmative, and in fact such functor mates to a unique 2-functor $\bfN\boxtimes\Mly_\bbB\to \bbB$ under the isomorphism given by Gray tensor product \cite{Gray1974}; this somehow preserves the intuition (cf. \cite[§1]{Tierney1969}) of $\Omega\bbB$ as a category of `lax dynamical systems'.
\end{remark}

\section{Conclusions}\label{sec:concl}
We sketch some directions for future research.
\begin{conjecture}\label{in_tvprof}
  Given a monad $T$ on $\Set$ and a quantale $\clV$ \cite[Ch. 2]{Eklund2018} we can define the locally thin bicategory $\TVProf$ as in \cite[Ch. III]{hofmann2014monoidal}; as $(T,\clV)$ vary we generate a plethora of bicategories, yielding the categories of topological spaces, approach spaces \cite{lowen1997approach}, metric and ultrametric, and closure spaces as the \emph{$(T,\clV)$-categories} of \cite[§III.1.6]{hofmann2014monoidal}. We conjecture that when instantiated in $\TVProf$, \autoref{reach_eq} yields a 2-categorical way to look at topological, metric and loosely speaking `fuzzy' approaches to automata theory.
\end{conjecture}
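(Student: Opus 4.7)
The plan is to make the informal claim precise by instantiating \autoref{in_rel} in the locally thin bicategory $\TVProf$ for a few salient choices of the pair $(T,\clV)$, and checking that the resulting Kan-extension formula specialises to notions of reachability already present in the topological, metric, and fuzzy automata literature.

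First I would set up the relevant Kan-extension machinery in $\TVProf$. Because $\TVProf$ is locally thin, each hom-category is a $\clV$-valued poset of $(T,\clV)$-matrices, so completeness of $\clV$ as a quantale guarantees the existence of all right extensions. The explicit formula closely mirrors the end formula recalled in \autoref{in_prof}: $\langle P/Q\rangle$ at a pair of points is the meet, over all intermediate points, of the internal hom $[P(-,-),Q(-,-)]$ computed in $\clV$, weighted by the $T$-part in the standard way of \cite[Ch.~III]{hofmann2014monoidal}. Applied to $P=I^\natural$ and $Q=O$, this yields a $(T,\clV)$-profunctor $R=\Ran_{I^\natural}O$ which, by the universal property, is the maximal $E$ with $E\leq O$ and $E\circ I^\natural\leq E$ in the $\clV$-enriched sense.

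Next I would identify the free promonad $I^\natural$ in $\TVProf$: this is the reflexive--transitive closure of $I$ computed in the $(T,\clV)$-enriched setting, i.e.\ the supremum of the iterated composites $I^{\circ n}$ (suitably lax when $T$ is nontrivial). In three benchmark instantiations I would verify the following specialisations, serving simultaneously as sanity checks and as evidence for the conjecture: for $(T,\clV)=(\id,\{0,1\})$ one recovers \autoref{reach_eq} verbatim; for $(T,\clV)=(\id,[0,\infty])$ composition is given by infima of sums, so $I^\natural$ is the shortest-path pseudometric on states, and $R$ measures reachability as an infimum, over input paths, of the output distance; for $(T,\clV)$ the ultrafilter monad together with $\clV=\{0,1\}$, $I^\natural$ encodes the topological closure of iterated inputs, so that $R$ is a closed reachability predicate, and similarly for approach and closure spaces. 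In each case I would then match the resulting $R$ with the relational, metric, or topological notion of recognisability in the corresponding literature, ideally 2-naturally in $(T,\clV)$.

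The hard part will be precisely this matching step. The conjecture is informal because the `topological' and especially `fuzzy' approaches to automata theory come in several inequivalent flavours, and a canonical dictionary between our $R$ and, say, the fuzzy recognisability predicates of \cite{arbib1975fuzzy} or the metric observables of the coalgebraic tradition is not obvious. The Kan-extension side of the story, by contrast, is essentially mechanical once local thinness and completeness of $\clV$ are in place; the genuine content of \autoref{in_tvprof} lies in establishing that dictionary and in showing it is invariant under the change-of-base 2-functors between the bicategories $\TVProf$ induced by morphisms of quantales and monads.
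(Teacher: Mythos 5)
The statement you are addressing is labelled a \emph{conjecture} and the paper deliberately offers no proof of it --- it appears in the conclusions as a direction for future work --- so there is no argument of the authors' to compare yours against. What you have written is a research plan rather than a proof, and by your own admission it stops short of the actual content of the claim: the ``matching step'' identifying $R=\Ran_{I^\natural}O$ with recognisability/reachability notions already present in the topological, metric and fuzzy automata literature is precisely what the conjecture asserts, and you leave it open. As a plan it is sensible and consistent with the paper's own treatment of the cases $\clV=\{0,1\}$ (\autoref{in_rel}) and $\mymathbb{Prof}$ (\autoref{in_prof}), and your three benchmark instantiations (recovering \autoref{reach_eq} for $(\id,\{0,1\})$, shortest-path reachability for Lawvere metric spaces, closed reachability for the ultrafilter monad) are the right sanity checks.

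There is, however, a genuine technical gap even in the part you call ``essentially mechanical''. For a nontrivial monad $T$ the Kleisli convolution of $(T,\clV)$-relations is in general only \emph{lax} associative and lax unital, so $\TVProf$ need not be a bicategory of the kind in which the free-promonad construction and the right-extension calculus of \autoref{la_ran} apply off the shelf; one must either impose conditions on $T$ (flatness/associativity of the lax extension, as in \cite[Ch.~III]{hofmann2014monoidal}) or work with $(T,\clV)$-categories and modules between them rather than with raw $(T,\clV)$-matrices. Relatedly, your description of $I^\natural$ as the supremum of iterated composites $I^{\circ n}$ presupposes that composition with a fixed $1$-cell preserves these suprema and that the iterates are coherently comparable, which again holds automatically only when $T=\id$; for the ultrafilter monad this is exactly where the topological subtlety lives, and it cannot be waved through. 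Until local composition, the existence of $I^\natural$, and the existence of right extensions are established for the specific $(T,\clV)$ you target, even the ``mechanical'' half of the programme is not yet in place.
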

\begin{conjecture}
  From \autoref{in_cat} and \ref{in_prof} we argue that the `non\hyp{}determinism via Kleisli category' approach of \cite{Guitart1980} can be carried over for the presheaf construction on $\Cat$ and its Kleisli bicategory $\Prof$: if automata (classically intended) in the Kleisli category of the powerset monad are nondeterministic automata in $\Set$, \emph{bicategorical} automata in the Kleisli \emph{bicategory} of the \emph{presheaf construction} (cf. \cite{Fiore2017}) are nondeterministic bicategorical automata: passing from \autoref{in_cat} to \autoref{in_prof} accounts for a form of non\hyp{}determinism.
  But then one might be able to address \emph{nondeterministic} bicategorical automata in $\bbB$ as \emph{deterministic} bicategorical automata in a generic proarrow equipment \cite{rosebrugh1988proarrows,wood1982abstract,wood1985proarrows} for $\bbB$!
\end{conjecture}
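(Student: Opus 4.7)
The plan is to unpack the conjecture into two sub-claims and attack them in succession. The first is that bicategorical automata valued in $\mymathbb{Prof}$ form the `correct' notion of nondeterministic bicategorical automaton with respect to $\mymathbb{Cat}$; the second is that every such nondeterministic automaton in a bicategory $\bbB$ admits a deterministic presentation once $\bbB$ is embedded in a proarrow equipment $\bbB\hookrightarrow\bbM$.

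For the first part, I would begin by reinterpreting the base-change remark following \autoref{KL_to_K} at the level of bicategories: concretely, I would show that any suitably coherent pseudofunctor $F:\bbB\to\bbB'$ between bicategories (with enough structure to transport the composition $\agdacirc$ needed for the spans \eqref{bica_mealy}) induces a pseudofunctor $F_*:\Mly_\bbB\to\Mly_{\bbB'}$, by sending $(e,\delta,\sigma)$ to $(Fe,F\delta,F\sigma)$ with the coherence 2-cells of $F$ absorbed into the composites. Applied to the canonical inclusion $\mymathbb{Cat}\hookrightarrow\mymathbb{Prof}$ (sending a functor $U:\clC\to\clD$ to its representable profunctor $\clD(\firstblank,U\firstblank)$), this would produce a faithful `underlying deterministic automaton' functor $\Mly_{\mymathbb{Cat}}\hookrightarrow\Mly_{\mymathbb{Prof}}$; the objects of $\Mly_{\mymathbb{Prof}}$ \emph{not} in the image would then deserve the name of `nondeterministic'. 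To justify the Kleisli interpretation I would invoke Fiore's presentation of $\mymathbb{Prof}$ as the Kleisli bicategory of the presheaf pseudomonad on $\mymathbb{Cat}$, so that the inclusion $\mymathbb{Cat}\hookrightarrow\mymathbb{Prof}$ is the bicategorical analogue of $F_P:\Set\to\Kl(P)$ from \autoref{K_to_KL}; by \autoref{in_prof} and \autoref{in_rel} the reachability Kan extensions then descend to the nondeterministic (profunctorial, relational) setting as one would hope.

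For the second part I would use the definition of a proarrow equipment $(\bbB,\bbM,i)$: $i:\bbB\hookrightarrow\bbM$ is a homomorphism of bicategories that is bijective on 0-cells, locally fully faithful, and sends every 1-cell to a map (a left adjoint) in $\bbM$. Given a nondeterministic machine in $\bbB$, i.e.\ an object of $\Mly_\bbM(i,o)$ for input/output $i,o$ that are genuine proarrows, the plan is to show that the span $e\overset\delta\Leftarrow e\agdacirc i\overset\sigma\To o$ in $\bbM$ \emph{is} a deterministic automaton in $\bbM$ in the sense of \autoref{def:bicatmealy}, and that if $i,o$ arise from 1-cells of $\bbB$ then the determinism in $\bbB$ corresponds exactly to $e$ being a map in $\bbM$; the Kleene-closure $i^\natural$ and the reachability Kan extension of \autoref{la_ran} would then compute uniformly on both sides, via the mate correspondence induced by the local adjunctions $i\dashv i^*$ in the equipment. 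The upshot is a biequivalence between an appropriate subcategory of $\Mly_\bbM(i,o)$ (the `map-valued' one) and $\Mly_\bbB(i,o)$.

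The main obstacle I expect is purely coherence-theoretic: the bicategorical presheaf construction is a pseudomonad (not a strict 2-monad), and the resulting Kleisli bicategory has composition defined only up to coherent isomorphism, so the lifting $F_*:\Mly_\bbB\to\Mly_{\bbB'}$ along a pseudofunctor $F$ must track the associator and unitor data of $F$ carefully when reassembling the 2-cells $\delta,\sigma$ in \eqref{bica_mealy}; in particular, the right Kan extensions of \autoref{la_ran}, which exist up to canonical iso in $\mymathbb{Prof}$, must be shown to be compatible with the pseudomonad multiplication so as to yield a well-defined behaviour construction. Size issues (presheaves of large categories) and the question of whether $i^\natural$ exists in the Kleisli bicategory will require restricting to small/accessible settings. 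Once these coherence issues are tamed, the second part --- the reduction of nondeterminism to determinism via an equipment --- should follow formally from the mate calculus of \cite{wood1982abstract,wood1985proarrows}.
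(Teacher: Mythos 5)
The first thing to note is that the paper contains no proof of this statement: it is explicitly a \emph{conjecture}, placed in \autoref{sec:concl} among directions for future research, so there is no argument of record to compare yours against. What you have written is a research programme rather than a proof, and judged as such it is sensible and close to what the authors presumably intend; but it has concrete gaps that would have to be closed before it could be called a proof. First, the conjecture is partly definitional (``accounts for a form of non-determinism'' is not yet a mathematical statement), and the substantive content is the comparison with the classical picture: you would need to show that $\Mly_{\mymathbb{Prof}}$, specialised to discrete categories (equivalently, working in $\Rel$ as in \autoref{in_rel}), recovers automata in $\Kl(P)$ for the powerset monad on $\Set$, i.e.\ that the bicategorical base change along $\mymathbb{Cat}\hookrightarrow\mymathbb{Prof}$ is compatible with the 1-categorical base changes of \autoref{K_to_KL} and \autoref{KL_to_K}. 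You gesture at this via \autoref{in_rel} but never state the commuting comparison, and without it the ``Kleisli interpretation'' remains a slogan. Second, your pseudofunctor $F_*:\Mly_\bbB\to\Mly_{\bbB'}$ presupposes a global bicategory $\Mly_\bbB$, which the paper only begins to assemble via intertwiners (\autoref{new_mor}) and leaves incomplete; as things stand only the hom-categories $\Mly_\bbB(i,o)$ are defined, so you must either work one hom-category at a time, $F_{i,o}:\Mly_\bbB(i,o)\to\Mly_{\bbB'}(Fi,Fo)$ (which is straightforward and suffices for the embedding), or first construct the intertwiner bicategory, which is itself an open task.

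The more serious issue is in the second half. Your proposed biequivalence identifies the \emph{deterministic} automata of $\bbB$ with the ``map-valued'' automata in an equipment $\bbB\hookrightarrow\clM$; that is the easy inclusion, essentially local full faithfulness of the equipment. The content of the conjecture's last sentence is the converse reading: that the \emph{general} objects of $\Mly_{\clM}(i,o)$ --- which are ``deterministic'' by the letter of \autoref{def:bicatmealy} applied to $\clM$ --- are exactly the nondeterministic automata of $\bbB$, and that the behaviour of \autoref{la_ran} computed in $\clM$ restricts correctly along the inclusion. For that you need the free monad $i^\natural$ on the input proarrow to exist in $\clM$ and the right extension $\Ran_{i^\natural}o$ to be compatible with the mate correspondence of the equipment; in $\mymathbb{Prof}$ both are available (the free promonad and the end formula of \autoref{in_prof}), but in a generic equipment they are genuine extra hypotheses that your plan does not isolate. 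You correctly flag the coherence and size issues for the presheaf pseudomonad, but these are the routine part; the missing mathematics is the precise statement and proof of the two comparison results above.
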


\bibliography{refs}
\appendix
\section{Appendix \thesection: Proofs}\label{sec:a:proofs}
\subsection{Diagrams}
\[\label{identizie}
\begin{tikzpicture}[scale=.625,baseline=(current bounding box.center)]
	\draw (0,0) |- (2,2) -- ++(-60:1.15470053838) -- (2,0) -- cycle;
	\draw (0,2) -- ++(-60:1.15470053838) -- (0,0);
	\draw (60:1.15470053838) -- ++(2,0);
	\node at (3,1) {$=$};
	\begin{scope}[xshift=3.5cm]
	\draw (0,0) |- (2,2) -- ++(-60:1.15470053838) -- (2,0) -- cycle;
	\draw (2,0) -- (2,2);
	\end{scope}
	\node[font=\footnotesize] at (.25,1) {$\delta$};
	\node[font=\footnotesize] at (5.5+.25,1) {$\delta'$};
	\node[font=\footnotesize] at (4.5,1) {$\epsilon$};
	\node[font=\footnotesize] at (1.25,.5) {$\epsilon$};
	\node[font=\footnotesize] at (1.25,1.5) {$\iota$};
	\begin{scope}[xshift=8cm]
		\draw (0,0) |- (2,2) -- ++(-60:1.15470053838) -- (2,0) -- cycle;
		\draw (0,2) -- ++(-60:1.15470053838) -- (0,0);
		\draw (60:1.15470053838) -- ++(2,0);
		\node at (3,1) {$=$};
		\begin{scope}[xshift=3.5cm]
		\draw (0,0) |- (2,2) -- ++(-60:1.15470053838) -- (2,0) -- cycle;
		\draw (2,0) -- (2,2);
		\end{scope}
		\node[font=\footnotesize] at (.25,1) {$\sigma$};
		\node[font=\footnotesize] at (5.5+.25,1) {$\sigma'$};
		\node[font=\footnotesize] at (4.5,1) {$\omega$};
		\node[font=\footnotesize] at (1.25,.5) {$\epsilon$};
		\node[font=\footnotesize] at (1.25,1.5) {$\iota$};
	\end{scope}
	\node at (7,1) {\small and};
	\node at (14.5,1) {\small ;};
\end{tikzpicture}
\]
\[\vxy{
	A \drtwocell<\omit>{\iota}\ar[r]^u\ar[d]_i & A'\ar[d]^{i'} & A \drtwocell<\omit>{\epsilon}\ar[r]^u\ar[d]_e & A'\ar[d]^{e'}  & A \drtwocell<\omit>{\omega}\ar[r]^u\ar[d]_o & A'\ar[d]^{o'} \\
	A \ar[r]_u & A' & B \ar[r]_v & B' & B \ar[r]_v & B'
}\label{cellette}\]
\[\label{diag_2}
    \begin{tikzpicture}[scale=.5,baseline=(current bounding box.center)]
      \commuThing{\iota}{\varphi}{\varphi}
      \begin{scope}[xshift=8cm]
        \commuThing{\epsilon}{\varphi}{\psi}
      \end{scope}
      \begin{scope}[xshift=16cm]
        \commuThing{\omega}{\varphi}{\psi}
      \end{scope}
    \end{tikzpicture}
  \]
\[\label{diag_1}\vxy{
	E'\otimes I'\otimes U\ar[r]^{d'\otimes U}\ar[d]_{E'\otimes I'\otimes f} & E'\otimes U \ar[d]^{E'\otimes f}& V\otimes E \otimes I \ar[r]^{V\otimes d}\ar[d]_{g\otimes E\otimes I}& V\otimes E \ar[d]^{g\otimes E}\\
	E'\otimes I'\otimes U' \ar[r]_{d'\otimes U'}& E'\otimes U' & V'\otimes E \otimes I \ar[r]_{V'\otimes d}& V'\otimes E
}\]

\subsection{Proofs}
\begin{proof}[Proof of \autoref{lemma_1}]\label{proof_of_lemma_1}
  In order to prove that the assignment $s\mapsto s^\flat$ is well defined in the set of fugal automata, we proceed by induction on the length of a string $\ell$. We have to prove that
  \[ s^{\flat}(e,\ell\concat as)=s^{\flat}(e,\ell)\concat s^{\flat}(d^*(e,\ell),as) \]
  The base case $\ell = [\,]$ is evidently true, so suppose that $\ell = x\kons xs$ is not empty and the claim is true for every choice of a shorter $xs$: then,
\begin{align*}
s^{\flat}(e, (x\kons xs)\concat as) &= s^{\flat}(e, (x\kons xs)\concat as)\\
&=s^{\flat}(e, x\kons (xs\concat as))\\
&=s(e,x)\kons s^\flat(d(e,x), xs\concat as)\\
&=s(e,x)\kons \big(s^\flat(d(e,x), xs) \concat s^\flat(d^*(e,xs), as)\big)\\
&=\big(s(e,x)\kons s^\flat(d(e,x), xs)\big) \concat s^\flat(d(x,d^*(e,xs)), as)\\
&= s^\flat(e, x\kons xs) \concat s^\flat(d^*(e,x\kons xs), as)\\
&= s^\flat(e, \ell) \concat s^\flat(d^*(e,\ell), as).
\end{align*}
We now have to show that any 2-cell $f:(E,d,s)\to (F,c,t)$ is in fact a 2-cell $(E,d^*,s^{\flat})\to (F,c^*,t^{\flat})$. This can be done by induction as well, with completely similar reasoning.%
\end{proof}
\begin{proof}[Proof of \autoref{is_beh}]\label{proof_of_is_beh}
We have to prove that
\[(s_2 \c s_1)^\flat = s_2^\flat \c s_1^\flat.\]
The two functions coincide on the empty list by definition; hence, let $\ell = a\kons as$ be a nonempty list and $(e,f)\in E\times F$ a generic element. The right\hyp{}hand side of the equation is
\begin{align*}
(s_2^\flat \c s_1^\flat)((e,f), a\kons as) &= s_2^\flat(f, s_1^\flat(e,a\kons as))\\
&=s_2^\flat\big(f, s_1(e,a)\kons s_1^\flat(d_1(e,a), as)\big)\\
&=s_2(f, s_1(e,a)) \kons s_2^\flat(d_2(f, s_1(e,a)),s_1^\flat(d_1(e,a), as))\\
&=(s_2\c s_1)((e,f),a) \kons (s_2\c s_1)^\flat((d_2\c d_1)((e,f),a),as)
\end{align*}
which concludes the proof.
\end{proof}
\begin{proof}[Proof of \autoref{fun_mly_mac}]\label{proof_of_fun_mly_mac}
Similarly to \autoref{is_beh}, we have to prove that $d_2^*\c d_1^*=(d_2\c d_1)^*$ whenever $d_2,d_1$ are two dynamic maps of composable Mealy machines $\langle s_1 ,d_1\rangle : E\times M\to N\times E$ and $\langle s_2 ,d_2\rangle : F\times N \to P\times F$. This, together with \autoref{is_beh}, will establish functoriality on 1-cells of $(\firstblank)^\flat$. Functoriality on 2-cells is very easy to establish. For what concerns $\Pi$, the proof amounts to showing that the composition of (fugal) Mealy machines gets mapped into the composition of spans in $\Mac$; this can be checked with ease and follows from the fact that the translation category of the action $d_2\c d_1$ as defined in \eqref{d2d1_term} has the universal property of the pullback $\clZ$ in
\[\vxy[@C=4mm@R=4mm]{
  &&\clZ\ar[dr]\ar[dl]&&\\
&\clE[d_1^*]\ar[dr]^{\Sigma_1}\ar[dl]_{D_1}&&\clE[d_2^*]\ar[dr]^{\Sigma_2}\ar[dl]_{D_2}\\
M && N && R.
}\]
This is a straightforward check, and it is also straightforward to see that the composition of $\Sigma_2$ with the right projection from $\clZ$ coincides with the `Sigma' functor induced by $s_2\c s_1$, which concludes the proof.
\end{proof}
\begin{proof}[Proof of \autoref{the_adjunzia}]\label{proof_of_the_adjunzia}
It is worthwhile to recall what a biadjunction is
\[\vxy{
  F : \mathsf C \ar@<.45em>[r]\ar@{}[r]|\perp &\ar@<.45em>[l] \mathsf D : G
}\]
if $\mathsf C,\mathsf D$ are bicategories (cf. \cite[Ch. 9]{fiore}): for each two objects $C,D$ we are given an equivalence between hom\hyp{}categories $\mathsf D(FC,D)\simeq\mathsf C(C,GD)$, i.e. a pair of functors $H:\mathsf D(FC,D)\leftrightarrows\mathsf C(C,GD):K$ whose composition in both directions is isomorphic to the identity functor of the respective hom\hyp{}category --and all this depends naturally on $C,D$.

In order to prove this, let's fix a set $A$ and a monoid $M$, let's build functors
\[\vxy{
\bMly_\Set(A^*,M) \ar[r]^-H & \Mly_\Set(A,UM) & \Mly_\Set(A,UM) \ar[r]^-K & \bMly_\Set(A^*,M)
}\]
and prove that they form an equivalence of categories by explicitly showing that $HK$ and $KH$ are isomorphic to the respective identities. We'll often adopt the convenient notation $\langle s,d\rangle : E\times X\to Y\times E$ for a Mealy machine of input $X$ and output $Y$.
\begin{itemize}
\item Let $\langle s,d\rangle : E\times A^*\to M\times E$ be a fugal Mealy machine; $H\langle s,d\rangle$ is defined as the composition
\[\vxy{
  E\times A\ar[r]^-{E\times\eta_A} & E\times A^* \ar[r]^-{\langle s,d\rangle}& M\times E
}\]
where $\eta_A : A\to A^*$ is the unit of the free\hyp{}forgetful adjunction between $\Set$ and monoids. In simple terms, $H$ acts `restricting' a fugal Mealy machine to the set of generators of its input.
\item Let $\langle s_0,d_0\rangle : F\times A \to UM\times F$ be any Mealy machine on $\Set$, where $UM$ means that $M$ is regarded as a mere set; $K\langle s_0,d_0\rangle$ is defined as the composition
\[\vxy{
  F\times A^* \ar[r]^-{\langle s_0,d_0\rangle^\flat}& (UM)^* \times F \ar[r]^-{\varepsilon \times F} & M\times F
}\]
where $\varepsilon : (UM)^* \to M$ is the counit of the free\hyp{}forgetful adjunction between $\Set$ and monoids, and $\langle s_0,d_0\rangle^\flat$ is the fugal extension of \autoref{lemma_1}.
\end{itemize}
The claim is now that the fugal Mealy machine $KH\langle s,d\rangle$ coincides with $\langle s,d\rangle$, and that the generic Mealy machine $HK\langle s_0,d_0\rangle$ coincides with $\langle s_0,d_0\rangle$.

\medskip
Both statements depend crucially on the following fact: if $s : E\times M \to N$ satisfies Equation \eqref{beheq}, then for all $e\in E$ the element $s(1_M,e)$ is idempotent in $N$. In particular, if $N$ is free on a set $B$, $s(1_M,e)=[\,]$ is the empty list, and more in particular, for a generic Mealy machine $\langle s,\firstblank\rangle$ the fugal extension $s^\flat$ is such that for all $e\in E$, $s^\flat([\,],e)=[\,]$.

\medskip
Given this, observe that the Mealy machine $HK\langle s_0,d_0\rangle$ coincides with $\langle s_0^\flat \circ (F\times\eta_A),d_0^*\circ (F\times\eta_A)\rangle$; now clearly the composition $d_0^*\circ (F\times\eta_A)$ coincides with $d_0 : F\times A \to F$ and the two maps determine each other. As for $s_0^\flat \circ (F\times\eta_A)$, we have that for all $(f,a)\in F\times A$
\begin{align*}
s_0^\flat \circ (F\times\eta_A)(f,a) &= s_0^\flat(f,a\kons [\,]) \\
&= s_0(f,a)\kons s^\flat_0(f,[\,])\\
&= s_0(f,a) \kons [\,]
\end{align*}
Reasoning similarly, one proves that the fugal Mealy machine $KH\langle s,d\rangle$ has components $\langle (s\circ (E\times\eta_A))^\flat, (d\circ (E\times\eta_A))^*\rangle$: again, since functions $E\times A\to E$ correspond bijectively to monoid actions $E\times A^* \to E$, the map $(d\circ (E\times\eta_A))^*$ coincides with $d$; as for $(s\circ (E\times\eta_A))^\flat$, we can argue by induction that
\begin{align*}
(s\circ (E\times\eta_A))^\flat(e,[\,]) &= [\,] = s(e, [\,])\\
(s\circ (E\times\eta_A))^\flat(e, a\kons as) &= s(e,a)\kons (s\circ (E\times\eta_A))^\flat(d(a,e), as)\\
&=s(e,a)\kons s(d(a,e), as) \\
&=s(e,a\kons as)
\end{align*}
where the last equality uses that $s$ was fugal to start with. This concludes the proof.
\end{proof}
\begin{proof}[Proof of \autoref{mre_as_diag}]\label{proof_of_mre_as_diag}
The category $\clP$ is in fact 2-discrete (it has no 2-cells) and its objects and morphisms are arranged as follows:
\[\vxy{
    1 & \ar@<.25em>[l]^-y\ar@<-.25em>[l]_-x 0 \ar@<.25em>[r]^z\ar@<-.25em>[r]_t & 2
  }\]
For lack of a better name, $\clP$ is the \emph{generic double span}.

The functors $W,G$ are then constructed as follows:
\begin{itemize}
  \item $G : \clP\to\Cat$ is constant on objects at the category $\clK(A,B)$, and chooses the double span
        \[\vxy{
          \clK(A,B) & \ar@<.25em>[l]^-\id\ar@<-.25em>[l]_-\id\clK(A,B) \ar@<.25em>[r]^-{\firstblank\circ i} \ar@<-.25em>[r]_-\id & \clK(A,B);
          }\]
  \item $W : \clP\to\Cat$ chooses the double span
        \[\vxy{
          \{0\to 1\} & \ar@<.25em>[l]^-j\ar@<-.25em>[l]_-j \{\heartsuit,\spadesuit\} \ar@<.25em>[r]^-{c_0} \ar@<-.25em>[r]_-{c_1} & \{0\to 1\}
          }\]
        where $\{\heartsuit,\spadesuit\}$ is a discrete category with two objects, $j = \left(\bsmat \heartsuit\mapsto 0 \\ \spadesuit\mapsto 1\esmat\right)$, and $c_k$ is constant at $k\in\{0,1\}$.
\end{itemize}
Now, it is a matter of unwinding the definition of a natural transformation $\alpha : W \To G$ to find that we are provided with maps
\begin{align}
  \{e,\#\} & = \alpha_0 : W0 \to \clK(A,B)\notag \\
  \sigma   & = \alpha_1 : W1 \to \clK(A,B)       \\
  \delta   & = \alpha_2 : W2 \to \clK(A,B)\notag
\end{align}
and with commutative diagrams arising from naturality as follows, if we agree to label $\alpha_0(\heartsuit)=e$ and $\alpha_0(\spadesuit)=o$, and we blur the distinction between $\alpha_0$ and the embedding of its image $\{e,o\}$ in $\clK(A,B)$:
\[\small\vxy{
    \{e,o\}\ar[r]^-j\ar[d]_{\alpha_0} & \{0\to 1\}\ar[d]^{\alpha_1} & \{e,o\}\ar[r]^{c_0}\ar[d]_{\alpha_0} & \{0\to 1\}\ar[d]^{\alpha_2} & \{e,o\}\ar[r]^{c_1}\ar[d]_{\alpha_0} & \{0\to 1\}\ar[d]^{\alpha_2} \\
    \clK(A,B) \ar@{=}[r] & \clK(A,B) & \clK(A,B)\ar[r]_{\firstblank\circ i} & \clK(A,B) & \clK(A,B) \ar@{=}[r] & \clK(A,B)
  }\]
Altogether, we have that these data yield a diagram of 2-cells
\[\vxy{
  &A\ar[dl]_i\ar[dr]_e\druppertwocell^o{^\sigma}&\\
  A\ar[rr]_e&\utwocell<\omit>{\delta}&B   }\]
as in \eqref{mach_as_kan}. Modifications between these natural transformations correspond to suitable arrangements of 2-cells, in such a way that we recover the notion of morphism of bicategorical Moore machine given in \ref{bmo_2}.

In case the output $o$ is fixed, we just constrain $\alpha_0(\spadesuit)$ to be mapped in $o$ and modifications to be the identity at $\spadesuit$.

For bicategorical Mealy machines, redefine $Gx=Gz=\firstblank\circ i$ and the rest of the argument is unchanged.
\end{proof}

\begin{discussion}\label{complain}
	In a world of war and crippling inflation bytes are expensive, so page limits shorten by the month. \emph{This forces authors to shrink their papers, and the only way to do that is remove text.}

	A simple interpolation suggests that \emph{one day, the average submission will consist of just the picture of a cat surrounded by a circle and a square}; already today, we feel constrained to push in the appendix the email addresses of the authors: \emails.
\end{discussion}
\end{document}